\newtheorem{theorem}{Theorem}[section]
\newtheorem{corollary}[theorem]{Corollary}
\newtheorem{lemma}[theorem]{Lemma}
\theoremstyle{definition}
\theoremstyle{remark}
\def\numberlikeadb{\global\def\theequation{\thesection.\arabic{equation}}}
\newcommand{\eqa}{\begin{eqnarray}}
\newcommand{\ena}{\end{eqnarray}}
\newcommand{\eq}{\begin{equation}}
\newcommand{\en}{\end{equation}}
\newcommand{\eqs}{\begin{eqnarray*}}
\newcommand{\ens}{\end{eqnarray*}}
\def\Eq{\ =\ }
\def\Def{\ :=\ }
\def\Le{\ \le\ }
\def\pr{\mathbb{P}}
\def\ex{\mathbb{E}}
\def\ignore#1{}
\def\a{\alpha}
\def\s{\sigma}
\def\Ref#1{(\ref{#1})}
\def\re{\mathbb{R}}
\def\xx{{\mathcal X}}
\def\th{\theta}
\def\b{\beta}
\def\d{\delta}
\def\n{\nu}
\def\l{\lambda}
\def\eth{\eta_{\Theta}}
\def\g{\gamma}
\def\b{\beta}
\def\t{\tau}
\def\cmax{c_{\rm max}}
\def\amax{a_{\rm max}}
\def\smax{\s_{\rm max}}
\def\rmax{\rho_{\rm max}}
\def\emax{e_{\rm max}}
\def\emin{e_{\rm min}}
\def\amin{a_{\rm min}}
\def\smin{\s_{\rm min}}
\def\cth{{c}_{\Theta}}
\def\aot{{\alpha_1,\alpha_2}}
\def\PEV{\l}
\def\ddd{t}
\def\QQ{{\mathcal Q}}
\def\kd{v_{d-1}}
\def\iid{i.i.d.}
\def\te{{\tilde e}}
\def\tE{{\widetilde E}}
\def\ignore#1{}
\begin{document}
\title[]{Local approximation of a metapopulation's equilibrium}
\maketitle
\noindent A.D. BARBOUR, R. McVINISH and P.K. POLLETT  \footnote{ADB is supported in part by
Australian Research Council (Discovery Grants DP150101459 and DP150103588). PKP and RM are supported in 
part by the Australian Research Council (Discovery Grant DP150101459 and the ARC Centre of Excellence for 
Mathematical and Statistical Frontiers, CE140100049). We thank the referees for their helpful comments. }\\
Universit\"at Z\"urich and University of Queensland\\

% ----------------------------------------------------------------

\noindent ABSTRACT. 
We consider the approximation of the equilibrium of a metapopulation model, in which a finite number of patches are randomly distributed over a bounded subset~$\Omega$ of Euclidean space. The approximation is good when a large number of patches contribute to the colonization pressure on any given unoccupied patch, and when the quality of the patches varies little over the length scale determined by the colonization radius. If this is the case, the equilibrium probability of a patch at~$z$ being occupied is shown to be close to~$q_1(z)$, the equilibrium occupation probability in Levins's model, at any point $z \in \Omega$ not too close to the boundary, if the local colonization pressure and extinction rates appropriate to~$z$ are assumed.  The approximation is justified by giving explicit upper and lower bounds for the occupation probabilities, expressed in terms of the model parameters.  Since the patches are distributed randomly, the occupation probabilities are also random, and we complement our bounds with explicit bounds on the probability that they are satisfied at all patches simultaneously.

\bigskip

\noindent\emph{Keywords}: incidence function model, spatially realistic Levins model\\
\noindent\emph{MSC 2010}:  92D40; 60J10; 60J27

\section{Introduction}
\setcounter{equation}{0}

A number of papers have addressed the problem of approximating a complex metapopulation model by Levins's 
model \citep[][among others]{Etienne:02,Keeling:02,BP:04,OC:06}. For example, in the setting of \citet{OC:06},  a 
metapopulation is taken to consist of an infinite number of patches in~$\mathbb{R}^{d} $. In their simplest case, 
the locations of the patches are determined by a Poisson point process with constant intensity measure; 
they also consider stationary point processes with spatial correlation.  The colonization rate of an empty 
patch is determined by its distance from the occupied patches and by a colonization kernel. Under such 
conditions, \citet{OC:06} give asymptotic expansions describing the differences between the equilibrium 
properties of the metapopulation and  what would be expected under a uniform mean field Levins model, in the 
limit where the range of the colonization kernel tends to infinity. Their expansions were formally
justified in \cite{OvasEtAl:14}.

In this paper, we take a somewhat different approach.  First, we are interested in metapopulations which are not 
infinite in extent, but consist of only finitely many patches, and whose underlying landscape is not uniform; 
in particular, it might consist of a number of regions in which the metapopulation is viable, separated by 
regions where it is not. In previous work \cite{BMP:15}, we have demonstrated that deterministic metapopulation 
models provide good approximations to their stochastic counterparts, at least over finite time horizons, 
provided that the colonization pressure at a patch results from the sum of the effects of many other patches 
--- this is equivalent to the assumption in \citet{OC:06} that the colonization kernel has long range. However, 
if the landscape is not uniform, even the equilibrium state of the deterministic system is unknown. In this 
paper, we show how to construct an approximation to  the equilibrium state of the deterministic system, 
provided that the properties of the landscape
do not vary much over the range of the colonization kernel. The approximation is local, in the sense that 
the equilibrium probability of a patch at position~$z$ being occupied is what it would be if the landscape 
were  everywhere constant, with its parameters taking the values that are taken at~$z$.  
Rather than justifying the approximation in terms of limit theorems, we prefer to give explicit bounds on 
the accuracy of the approximation, which depend on the expected number of patches 
contributing to the colonization pressure at a given patch, on the possible variation of the landscape 
within the colonization radius, and on the ratio of the colonization radius to the diameter of the entire region 
--- in a finite region, boundary effects also play a part.  Patches are modelled as the points of a 
Bernoulli point 
process with spatially varying intensity, and so such error bounds cannot be definitive;  instead, we also give 
expressions bounding the probability that our error bounds are correct.

\section{The equilibrium of a metapopulation}
\setcounter{equation}{0}

The incidence function model of \citet{Hanski:94} in~$d$ dimensions for a metapopulation comprising $ n $~patches 
spread over a habitat~$\Omega$ of volume~$A$ is a discrete time Markov chain on $\xx := \{0,1\}^{n} $. Usually, $d=2$, 
and we think of volume as an area, but this is not needed here.  Denote this Markov chain by 
$ X_{t}= (X_{1,t},\ldots,X_{n,t}) $, where $ X_{i,t} = 1 $ if patch~$i$ is occupied at time $ t $ and $ X_{i,t} = 0 $ 
otherwise. We assume that the patch size and its ability to support a local population depend only on the patch location. 
Let $z_{i} \in \Omega \subset \mathbb{R}^{d} $ denote the location of the $i$-th patch. The transition probabilities 
of the Markov chain are determined by how well the patches are connected to each other and by the probability of local 
extinction. Define the functions $S_i\colon\, [0,1]^{n} \mapsto [0,\infty) $, which represent the aggregate migration 
pressure on patch~$i$ from the remaining patches, by
\begin{equation}
   S_{i}(x) \Eq \frac{A}{(n-1)} \sum_{j\neq i}  a(z_{j}) c(z_{i},z_{j};r) x_{j},  \label{IFM:Connect}
\end{equation}
where 
\[
    c(z,y;r) \Def r^{-d} c_{z}(\|z-y\|/r),
\]
and, for each~$z$, $c_z$ is an integrable function with maximum value at most~$\cmax$.  In what follows, we assume 
that $c_{z}(x) = 0$ for $x > 1$, so that the migration range is bounded by~$r$; this is to simplify the
analysis, and could be relaxed. The average density of population is given 
by the ratio $n/A$, so that, within such a range, there can be expected to be about $nr^d/A$ patches, over which the 
migration effort of a patch~$j$ is distributed.  Hence each patch contributes about $(nr^d/A)^{-1}$ of its effort to 
each other neighbouring patch, and this is why the ratio $A/(n-1)$ appears in~$S_i$, and the normalization~$r^{-d}$ in 
the definition of~$c(z,y;r)$.
\ignore{We shall suppose also that $a(z) \le \amax$ for all~$z \in \Omega$. }
Conditional on $ X_{t}$ and the set of patch 
locations $ \{z_{i}\}_{i=1}^{n} $, the $ X_{i,t+1} \ (i=1,\ldots,n) $ are 
independent with transition probabilities
\begin{equation}
   \pr\left(X_{i,t+1}=1 \ \middle| \ X_{t} , z_{1},\ldots,z_{n}\right)  \Eq 
      f(S_{i}(X_{t}))\left(1-X_{i,t}\right) +  (1-e(z_{i}))X_{i,t}. \label{Eq1}
\end{equation}
If patch $ i $ is occupied at time $ t $, then that population survives to time $ t+1 $ with probability $ 1-e(z_{i}) $. 
Otherwise, it is colonised with probability $  f(S_{i}(X_{t}))$. 

\citet[section 6.3]{AM:02} proposed a continuous time analogue of the incidence function model.   Their model is a 
continuous time Markov chain $ X(t) = (X_{1}(t),\ldots,X_{n}(t)) $ on $ \xx$, whose transition rates in the above 
setting are given by
\begin{eqnarray}
  \begin{array}{rcl}
  X \rightarrow X + \delta_{i}^{n} & \quad \mbox{at rate} \quad & f(S_{i}(X)) (1-X_{i});  \\
  X \rightarrow X - \delta_{i}^{n} & \quad \mbox{at rate} \quad & e(z_{i}) X_{i}, 
  \end{array} \label{ADB-Eq4}
\end{eqnarray}
and $ \delta^{n}_{i}$  is the vector of length $ n $ with $ 1 $ at position $ i $ and zeros elsewhere. 

Since both processes are finite state Markov processes, with the extinction state absorbing and accessible 
from all other states, the extinction state is almost surely eventually reached. However, in many circumstances,
the processes may remain for long periods in an apparent stochastic equilibrium, a quasi-stationary distribution.  
In~\cite{BMP:15}, it is shown that the stochastic processes can be  well approximated by corresponding 
deterministic systems, at least over bounded time intervals.  Thus, if the stochastic processes have initial 
conditions corresponding to any equilibrium of the deterministic systems, they remain close  to this equilibrium 
over bounded time intervals, with asymptotically high probability. In this paper, working under conditions which 
guarantee at most one equilibrium of the deterministic systems other than extinction, we address the problem of 
describing the equilibrium.

The deterministic approximation of the Markov chain defined by \Ref{Eq1} was proposed by \citet{OH:01}. 
Let $p_{i,t}$ be the probability that patch~$i$ is occupied at time~$t$ and let $ p_t = (p_{1,t},\ldots,p_{n,t}) $. 
As in the incidence function model, they model the change in $p_t$ by
\begin{equation}
   p_{i,t+1}-p_{i,t} \Eq f(S_{i}(p_t))(1-p_{i,t}) - e(z_i) p_{i,t}. \label{Eq2}
\end{equation}
For the continuous time metapopulation model (\ref{Eq2}), the deterministic approximation is provided by the 
spatially realistic Levins model \cite{HG:97}. This model is a system of ordinary differential equations 
\begin{equation}
   \frac{dp_{i}(t)}{dt} \Eq f(S_{i}(p(t)))(1-p_{i}(t)) - e(z_{i}) p_{i}(t), \label{Eq3}
\end{equation}
for $p\colon \re_+ \to [0,1]^n$. The equilibrium levels of both these deterministic models are given by the 
fixed points~$p_n^*$ of the function $ E_{n}: [0,1]^{n} \rightarrow [0,1]^{n} $, where
\[
    E_{n}(p)_{i} \Def 
        \frac{f\left((A/(n-1)) \sum_{j\neq i} a(z_{j}) c(z_{i},z_{j};r) p_{j}\right)}
             { e(z_{i}) + f\left((A/(n-1)) \sum_{j\neq i} a(z_{j}) c(z_{i},z_{j};r) p_{j}\right)}.
\]
Define the matrix $ T_{ij} = f^{\prime}(0) (A/(n-1)) a(z_{j}) c(z_{i};z_{j};r) /e(z_{i}) $ 
for $ i \neq j $ and $ T_{ii} = 0 $, and let $ \PEV(T) $ be the 
Frobenius-Perron eigenvalue of~$T$. When $ f $ is continuous and concave and $ T $ is primitive, that is, if $ T^{k} $ is a positive matrix for some $ k $, the cone limit set trichotomy \cite[Theorem 6.3]{HS:05} can be applied to conclude that 
\begin{itemize}
\item If $ \PEV(T) \leq 1 $, then $0 \in \re^n$ is the only fixed point of $ E_{n} $;
\item If $ \PEV(T) > 1 $, then, in addition to $0$, $ E_{n} $ has a non-zero fixed point.
\end{itemize}
In what follows,  we denote the largest fixed point of $ E_{n} $ by~$ p^{\ast}_{n} $. Our aim is to 
determine good approximations to $ p_{n}^{\ast} $.  We do so under certain assumptions. 
\begin{itemize}
  \item[(A)] Independent patch locations: The patch locations $ z_{i} $ are independently distributed over the closed, bounded and
{\it connected\/} set~$\Omega$, with 
   probability density $A^{-1} \sigma(\cdot) $.
\end{itemize}
We then define
\eq\label{rho-def}
   \rho(z) \Def \int_\Omega a(y)c(z,y;r) \sigma(y)\,dy.
\en
\begin{itemize}
\item[(B)] Bounded support of colonisation kernel: For all $  z \in \Omega $, $ c_{z}(x) > 0 $ for 
all $ x \in [0,1) $ and $ c_z(x) = 0 $ for all $ x > 1 $.
\item[(C)] Smoothness: The functions $ e, a, f, \sigma $ and $ \rho $  are continuously differentiable 
on $ \Omega $. Their respective Lipschitz constants are denoted by $ L_{e}, L_a, L_f, L_\sigma $ 
and~$ L_\rho $. The colonisation kernel $ c_{z}(x) $ is uniformly continuous on $ \Omega\times[0,1)$.
\item[(D)] Bounded:  The functions $ e,\sigma$ and~$a$  are bounded above by $\emax,\smax$ and~$\amax$ and below by
{\it positive\/} constants $\emin,\smin$ and~$\amin$, respectively. The function $ c_{z} $ is bounded above by $\cmax $, 
for each $z\in\Omega$.
\item[(E)] Concave colonisation function: $ f $ is an increasing concave function such that $ f(0) = 0 $,
and $f(0) > 0$. 
\end{itemize}
Note that, from Assumption~E, there is a constant~$C_1 > 0$ such that $f(x) \ge f'(0)x - C_1x^2$ for all $x \ge 0$, 
and that $L_f = f'(0)$.

The quantities in the set $\QQ := \{L_f, \smax,\amax,\emax,\smin,\amin, \emin, \cmax\}$ can all be taken to be fixed without reference to the values of $n,r$ and~$A$. However, the Lipschitz constants $L_{e}, L_a, L_\sigma$ and~$L_\rho$ measure the maximum possible changes in the corresponding functions per unit displacement of the arguments, and have units $\{\mbox{distance}\}^{-1}$.  Correspondingly, we shall take $rL_{e}, rL_a, rL_\sigma$ and~$rL_\rho$ rather than $L_e,L_a,L_\s$ and~$L_\rho$ to be the quantities of biological interest. The error in our approximations is measured in terms of these quantities;  in particular, certain inequalities between them must be satisfied, if our approximation bounds are to be valid.  However, we shall tacitly think of~$n$ and the combination $nr^d/A$ being big, and of $r^d/A, rL_{e}, rL_a, rL_\sigma$ and~$rL_\rho$ being small, all of which are needed if our approximation errors are to be small.  In essence, we establish conditions under which the probability of a patch at location~$z$ being occupied is the same as would be the case if the environment were locally constant, with the values taken at~$z$, and if the patches were not discrete, but were smeared over
the habitat according to the density function~$\s$.  Thus we want $rL_{e}, rL_a, rL_\sigma$ and~$rL_\rho$ to be small (within the colonization radius, environmental conditions do not vary a lot), and~$nr^d/A$ to be big (many patches averaging to realize the colonization pressure).   The requirement that $r^d/A$ be small is needed to prevent boundary effects dominating the final result.

\section{Approximation of the equilibrium}\label{Sect3}
\setcounter{equation}{0}

To construct our approximation, we first define the function $ q_{\alpha}\colon\, \Omega \rightarrow [0,1] $ such that $ q_{\alpha}(z) $ is the largest solution to the equation
\begin{equation}
x = \frac{f(x\rho(z))}{\alpha e(z) + f(x\rho(z))}. \label{Eq-rev1}
\end{equation}
This equation always has the solution $ x = 0 $ by Assumption E so $ q_{\alpha} $ is well defined. If $ f^{\prime}(0) \rho(z) > \alpha e(z) $, then equation (\ref{Eq-rev1}) also has a non-zero solution. We note that $ q_{\alpha} $ is decreasing in $ \alpha $ since the right-hand side of equation (\ref{Eq-rev1}) is decreasing in $ \alpha $. When $ f(x) = x $, $ q_{\alpha} $ can be written explicitly as
\[
q_{\alpha}(z) = 1 - \frac{\alpha e(z)}{\rho(z)}.
\]
We would ideally like to show that $ q_1(z_i)$ is a good approximation to $p_i$.  To do so, we establish upper and lower bounds, one using $q_\a(z_i)$, with $\a$ less than, but close to, $1$, and the other using $q_\b(z_i)$, for~$\b$ close to, 
but larger than, $1$.  More precisely, we first show that, with high probability, the function $ p_{\aot}^{+}\colon\, \Omega \rightarrow [0,1] $, defined as
\[
   p_{\aot}^{+}(z) \Def q_{\alpha_{1}}(z) \vee (1-\alpha_{2}),
\]
provides an upper bound on $ p^{\ast}_{n} $ for some $ \alpha_{1}$ and $\alpha_{2} $ such that 
$ 1/2 < \alpha_{2} \leq \alpha_{1} < 1 $; that is, that $ p^{\ast}_{i,n} \leq p^{+}_{\aot}(z_{i}) $ for all $ i $.  
To construct a lower bound on $ p^{\ast}_{n} $, we then choose some $ \Theta  \subset  \Omega $ with a smooth 
boundary $ \partial \Theta $. For $ m > 0 $ and $ \beta > 1 $, we define the function 
$p^{-}_{\Theta,\beta,m} \colon\, \Theta \rightarrow [0,1] $ by
\[
   p^{-}_{\Theta,\beta,m}(z) \Def \left( m\|z-\partial \Theta\| \wedge q_{\beta}(z) \right),
\]
where $ \| z - \partial \Theta\| $ is the distance from $ z $ to the boundary of $ \Theta $. The aim is then to 
find choices of $\beta $ and~$ m $  such that $ p^{-}_{\Theta,\beta,m} $ provides a lower bound on $ p^{\ast}_{n} $ 
with high probability.

\begin{theorem}[Upper bound] \label{thm:UB}
Let $ N(\Omega,r) $ is the number of balls of radius $ r $ required to cover $ \Omega $. Suppose that 
Assumptions A--E hold and that $ n > 2 N(\Omega,r/3) $.  Define 
\eq\label{rho-max-def}
   \eta_\Omega \Def \min_{z\in\Omega} q_{1}(z); \quad \rmax \Def \amax\cmax\smax v_d,
\en
where $ v_{d} $ is the volume of the $d$-dimensional unit ball. Assume that 
\begin{align}
  2L_{f}L_q r \rmax &\Le \emin (1-\alpha_{1})(\alpha_{1} \eta_\Omega \vee(1-\alpha_{2})); \label{UB:ineq1} 
\end{align}
where 
\begin{equation}
    L_q \Def \frac{\sqrt d}{\emin} \left( 2L_f L_{\rho}+ L_{e}\right). \label{lem:dif:eq4}
\end{equation}
Then, if $L_f\rmax/\emin > 1/2$,
\begin{align*}
   \mathbb{P} \bigl(  p_{n,i}^{\ast} \leq p^{+}_{\aot}(z_{i}) & \mbox{ for all } i=1,\ldots,n \bigr) \
        \geq\ 1 - 2n \exp\left( -C_{2} \frac{(n-1) r^d}{A}\,
             \frac{ \emin^{2} (1-\alpha_{1})^{2}}{16 \amax^2 L_{f}^{2}} \right) \\
   &   - \frac{n}2\, 
           \exp\left( - n \min_{z\in\Omega} A^{-1}\int_{\Omega} \mathbb{I}(\|y-z\|\leq r/3) \sigma(y)\, dy\right),
\end{align*}
where
\begin{align*}
  C_{2} \Def \{3\{\cmax\}^2 \smax v_d\}^{-1}.
\end{align*}
If $L_f\rmax/\emin \le 1/2$, then
\begin{align*}
   \mathbb{P} \left(  p_{n,i}^{\ast} =0 \mbox{ for all } i=1,\ldots,n\right) 
          &  \geq\ 1 - 2n \exp\left\{ -C_{2} \frac{(n-1) r^d}{A} \Bigl(\frac{ \rmax}{2 \amax}\Bigr)^2 \right\} \\
   &   - \frac{n}2\, 
           \exp\left( - n \min_{z\in\Omega} A^{-1}\int_{\Omega} \mathbb{I}(\|y-z\|\leq r/3) \sigma(y)\, dy\right).
\end{align*}
\end{theorem}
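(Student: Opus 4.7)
Write $U_i := p^+_{\aot}(z_i)$. My plan is to show that, on the stated high-probability event, $U$ is a super-solution of the fixed-point map, i.e.\ $E_n(U)_i \le U_i$ for every $i$. From this I would deduce $p^*_{n,i}\le U_i$ by combining the monotonicity of $E_n$ with the sub-homogeneity $\phi_z(\mu y)\le\mu\phi_z(y)$ for $\mu\ge 1$, where $\phi_z(y):=f(y)/(e(z)+f(y))$, which follows from $f$ being concave with $f(0)=0$: a standard contradiction argument then rules out any fixed point of $E_n$ strictly exceeding $U$ at some coordinate, so $p^*_n\le U$. The subcritical case $L_f\rmax/\emin\le 1/2$ is handled separately: the same Bernstein bound used below also bounds every row-sum of $T$ below $1$, so the empirical Frobenius--Perron eigenvalue $\l(T)<1$ with the stated probability, and the cone limit set trichotomy then forces $p^*_n\equiv 0$.

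Two analytic ingredients are needed. First, by implicit differentiation of the defining relation (\ref{Eq-rev1}) together with Assumptions (C)--(D), I establish that $q_\a$, and hence $U = q_{\alpha_1}\vee(1-\alpha_2)$, is Lipschitz on $\Omega$ with constant $L_q$ as given in (\ref{lem:dif:eq4}). Second, setting $\bar S_i(V) := (A/(n-1))\sum_{j\neq i}a(z_j)c(z_i,z_j;r)V(z_j)$, the summands $\{a(z_j)c(z_i,z_j;r)\}_{j\neq i}$ are i.i.d.\ with mean $\rho(z_i)\le\rmax$, maximum at most $\amax\cmax r^{-d}$, and second moment at most $\amax\cmax\rmax r^{-d}/v_d$. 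I apply Bernstein's inequality to $\bar S_i(1)-\rho(z_i)$ with deviation parameter a specific multiple of $\emin(1-\alpha_1)/(\amax L_f)$, and union-bound over $i=1,\dots,n$; this yields the first exponential term, with $C_2=(3\cmax^2\smax v_d)^{-1}$ arising directly from the variance bound and the factor $(1-\alpha_1)^2$ in the exponent coming from the choice of deviation parameter. The second exponential term comes from a minimum-density event on the minimal $r/3$-cover of $\Omega$ (of cardinality $N(\Omega,r/3)<n/2$ by hypothesis): a Chernoff bound ensures every $r/3$-ball contains at least one patch, making the Lipschitz-based local bounds uniform across all $i$.

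On the intersection of these events, the Lipschitz bound gives $\bar S_i(U)\le U(z_i)\bar S_i(1)+L_q r\,\bar S_i(1)$ while concentration gives $\bar S_i(1)\le\rho(z_i)(1+\e)$. The main technical step, which I expect to be the principal obstacle, is a case-by-case verification that these imply $\phi_{z_i}(\bar S_i(U))\le U_i$. In case (i), $U(z_i)=q_{\alpha_1}(z_i)\ge 1-\alpha_2$, I would use the identity $f(q_{\alpha_1}(z_i)\rho(z_i))(1-q_{\alpha_1}(z_i))=\alpha_1 e(z_i)q_{\alpha_1}(z_i)$ extracted from (\ref{Eq-rev1}) together with $f(y+\D)\le f(y)+L_f\D$; the slack $(1-\alpha_1)$, paired with the factor $\alpha_1\eta_\Omega$ in hypothesis (\ref{UB:ineq1}), furnishes exactly the budget needed to absorb the Lipschitz error of order $L_q r\rho(z_i)$. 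In case (ii), $U(z_i)=1-\alpha_2>q_{\alpha_1}(z_i)$, the target reduces to $f(\bar S_i(U))\le(1-\alpha_2)e(z_i)/\alpha_2$, and the factor $(1-\alpha_2)$ in (\ref{UB:ineq1}) supplies the required budget. Completing both cases gives $E_n(U)\le U$, after which the monotonicity/sub-homogeneity argument of the first paragraph concludes the supercritical case.
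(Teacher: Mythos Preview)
Your analytic core --- the Lipschitz bound on $q_\alpha$, the Bernstein/McDiarmid concentration for $\bar S_i(1)$, and the two-case verification that $E_n(U)\le U$ --- matches the paper's proof essentially line for line (the paper's cases U1, U2, U3). Your sub-homogeneity contradiction argument for passing from $E_n(U)\le U$ to $p^*_n\le U$ is in fact a valid alternative to the paper's route: the paper instead observes that $E_n$ is monotone, so the iterates $E_n^{(k)}(U)$ decrease, and then invokes the cone limit set trichotomy to conclude that they converge to $p^*_n$. The trichotomy, however, requires $T$ to be \emph{primitive}, and this is precisely where the second exponential term enters the paper's bound: Lemma~\ref{lem:T} shows that if every ball of an $r/3$-cover contains at least one patch, then the adjacency graph of $T$ is connected and aperiodic, hence $T$ is primitive. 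Your sub-homogeneity argument avoids primitivity altogether, so by your method that term is unnecessary in the supercritical case.

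This means your stated reason for the second exponential term --- ``making the Lipschitz-based local bounds uniform across all $i$'' --- is wrong. The Lipschitz bound $|p^+_{\aot}(y)-p^+_{\aot}(z)|\le L_q\|y-z\|$ is deterministic and needs no density event; it is applied directly inside each $\bar S_i(U)$. You should either (a) adopt the paper's route and invoke primitivity of $T$ via the covering event (which then justifies the second term as stated in the theorem), or (b) keep your sub-homogeneity argument and recognise that it yields the bound without that term. In the subcritical case you do invoke the cone limit set trichotomy, which again needs $T$ primitive --- so there you would genuinely need the covering event --- but even this is avoidable: once all row sums of $T$ are strictly below $1$, any fixed point satisfies $p\le Tp$ (from $p_i e(z_i)=(1-p_i)f(S_i(p))\le f'(0)S_i(p)$), hence $p\le T^k p\to 0$, without any appeal to primitivity.
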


The quantity $\eta_\Omega$, which acts as an indicator of the viability of the least favourable patch in~$\Omega$, plays an important role in restricting the range of $ \alpha_{1} $ and $ \alpha_{2} $ through inequality~(\ref{UB:ineq1}). When all patches are locally viable, that is when the colonization rate is greater than the extinction rate at each patch, $ q_{1}(z) > 0 $ for all $ z \in \Omega $ and $ \eta_{\Omega} > 0 $. In this case, inequality~(\ref{UB:ineq1}) in Theorem~\ref{thm:UB} can be satisfied by taking $ \alpha_{2} = \alpha_{1} $ and $ 1- \alpha_{1} $ as big as a multiple of~$L_q r$. On the other hand, if some of the patches in $\Omega$ act like population `sinks', having colonization rates less than their extinction rates, then $ \eta_\Omega = 0 $, and we must take $ (1-\alpha_{1})(1-\alpha_{2}) $ to be bigger than a multiple of $ L_q r$. This forces a trade-off in the tightness of the upper bound between different regions of $ \Omega $ as larger $ \alpha_{1} $ tightens the upper bound where the equilibrium is moderate/large and larger $ \alpha_{2} $ tightens the upper bound where the equilibrium is small. We can satisfy inequality~(\ref{UB:ineq1}) by taking $ \alpha_{1} = \alpha_{2} $ with $ 1-\alpha_{1} $ now a multiple of $ (L_{q} r)^{1/2} $. With this choice we attempt to make the upper bound as tight as possible uniformly over all of $ \Omega $.

Our result shows that if $ L_{f} \rmax/\emin \le 1/2 $, then the extinction state is the only equilibrium with high probability. This condition seems stronger than necessary and one might expect that it would be sufficient that $ L_{f} \max_{z\in \Omega} \frac{\rho(z)}{e(z)} < 1 $ for the extinction state to be the unique equilibrium with high probability. Unfortunately, a finer analysis would be required to establish such a result. Assuming $ L_{f} \max_{z\in \Omega} \frac{\rho(z)}{e(z)} < 1 $, Theorem~\ref{thm:UB} can only establish that any equilibrium is bounded by a multiple of $ (L_{q}r)^{1/2} $ with high probability provided $ L_{q} r $ is sufficiently small and  $ nr^{d+1} $ is large.

For sufficiently regular regions $ \Omega $, $ N(\Omega,r/3) \leq c Ar^{-d} $ for some constant $ c >0$, and 
so the assumption that $ n > 2N(\Omega,r/3) $ would normally be satisfied in situations where we expect the bound to hold 
with high probability.  Finally, defining $B_x(t) \Def \{ z: \|z - x \| \leq t \}$, we note that if $ \Omega $ is $r$-smooth, in the sense that, for some $\Omega' \subset \Omega$ ,
\[
     \Omega \Eq \bigcup_{x \in \Omega'} B_x(t_x),\qquad\mbox{with}\quad t_x \ge r \mbox{ for all } x\in\Omega',
\]
then
\[
    \int_{\Omega} \mathbb{I}(\|y-z\|\leq r/3) \, dy \ \ge\ c_d r^d \quad \mbox{for all } z\in\Omega,
\]
where~$c_d$ is a geometric constant depending only on~$d$.
In such circumstances,
\eq\label{r-smooth}
     n\min_{z\in\Omega} A^{-1}\int_{\Omega} \mathbb{I}(\|y-z\|\leq r/3) \sigma(y)\, dy\ \geq\ c_d\smin (nr^{d}/A).
\en

The lower bound is more complicated to state because boundary effects make themselves felt.  We restrict 
ourselves to proving lower bounds for $p_{i,n}^{\ast}$ for points~$z_i$ belonging to sets of the 
form $ \Theta := \Theta_{x,t} := B_x(t) $, where $ x \in \Omega $ 
and $ t > 0 $ are such that $ \Theta \subseteq \Omega $.  For different choices of~$\Theta_{x,t} \ni z_i$, 
the lower bounds may be different, in which case the largest can be taken.  Broadly speaking, if the 
set~$\Theta_{x,t}$ is such that the metapopulation is sufficiently locally viable throughout it, in that
\[
     \eta_{\Theta_{x,t}} \Def \min_{z \in \Theta_{x,t}}q_1(z)
\]
is not close to zero, and if~$z_i$ is not too close to its boundary $\partial\Theta_{x,t}$, then the lower 
bound is reasonably close to~$q_1(z_i)$.

%Suppose that we have such a~$\Theta$.  Then,  for each $1 \le \b \le 1 + \tfrac12\eta_\Theta$, we can define  a positive quantity $\epsilon_{\Theta,\b}$ in terms of the parameters of the process, that is at least as big  as a positive multiple of~$\eta_\Theta^2$, provided that $r/t, L_q r, L_\s r$ and~$L_a r$ are all small enough;  the detailed requirements are in the statement of Lemma~\ref{lem:ULB} given in Section \ref{sec:LB}. With  this~$\epsilon_{\Theta,\b}$, we have the following result.

\begin{theorem}\label{thm:LB}
Suppose that\/ $\min_{z\in\Theta}q_{1}(z) =: \eta_{\Theta} > 0$, that $1 <  \beta < \beta^{\prime} < 1+ \eth/2 $, 
and that $ L_{f} \rmax /\emin > 1/2$. Assume that inequalities (\ref{lem:ULB:ineq1})--(\ref{lem:ULB:ineq3}), (\ref{lem:localLB:ineq1})  and~(\ref{lem:localLB:ineq2}) hold. Then, with 
\[
   m \Def \frac{\emin^2 \eth (\beta' - \beta)}{4 r \rmax^{2} L_{f} (C_1 \rmax + L_f)},
\]
we have
\begin{equation*}
    \mathbb{P}\left(p_{i,n}^{\ast} \geq p^-_{\Theta,\beta',m}(z_{i}) \mbox{ for all } z_{i} \in \Theta \right) 
    \ \geq\ 1- 2n \exp\left( -C_{2} \frac{(n-1)r^d}{A}\,
                   \frac{ C_4^{2}  \emin^2 \eth^{4} (\beta-1)^{2}}{\amax^2L_{f}^{2}} \right),
\end{equation*}
where~$C_{2}$ is as in Theorem~\ref{thm:UB} and~$C_4$ is a function of the elements of~$\QQ$, 
given in~\Ref{C7-def}.
\end{theorem}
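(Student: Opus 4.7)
The plan is a monotone sub-solution argument. Writing $\tilde p_j := p^{-}_{\Theta,\beta',m}(z_j)$ for $z_j\in\Theta$ and $\tilde p_j:=0$ for $z_j\notin\Theta$, the monotonicity of $E_n$ (immediate from $f$ increasing and $a,c,\sigma\geq 0$) implies that if $E_n(\tilde p)_i\geq\tilde p_i$ for all~$i$, then the monotone iterates $E_n^k(\tilde p)$ increase to a fixed point dominated by $p^\ast_n$, so $p^\ast_{i,n}\geq\tilde p_i$ for every~$i$. The proof thus reduces to establishing $E_n(\tilde p)_i\geq\tilde p_i$ simultaneously for all~$i$ with $z_i\in\Theta$ (the other cases being trivial), with the claimed high probability. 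Rearranging the fixed-point form of $E_n$, this is equivalent to a lower bound on
\[
   S^-_i \Def \frac{A}{n-1}\sum_{j\neq i}a(z_j)c(z_i,z_j;r)\tilde p_j
\]
in terms of $p^-_{\Theta,\beta',m}(z_i)$ and~$e(z_i)$, read off from \Ref{Eq-rev1}.

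I would split the analysis into a conditional-expectation step and a concentration step. Conditional on~$z_i$, $\ex[S^-_i\giv z_i]=\bar S(z_i):=\int_\Theta a(y)c(z_i,y;r)\,p^-_{\Theta,\beta',m}(y)\,\sigma(y)\,dy$. Using the Lipschitz continuity of $e,a,\sigma,\rho$ and the resulting Lipschitz control on $q_{\beta'}$ (with constant of order~$L_q$ from~\Ref{lem:dif:eq4}), when $z_i$ lies deep enough inside~$\Theta$ that $B_{z_i}(r)\subset\Theta$ and $p^-=q_{\beta'}$ on that ball, one obtains $\bar S(z_i)=q_{\beta'}(z_i)\rho(z_i)+O(r)$. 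Substituting into the defining identity $f(q_{\beta'}(z)\rho(z))=\beta' e(z)q_{\beta'}(z)/(1-q_{\beta'}(z))$ produces a deterministic margin of order $(\beta'-1)\eth$ for $E_n(\tilde p)_i - p^-(z_i)$. Inequalities~\Ref{lem:ULB:ineq1}--\Ref{lem:ULB:ineq3} are precisely what is needed to ensure that the Lipschitz and ball-size corrections consume no more than a $(\beta'-\beta)/(\beta'-1)$-fraction of this margin, leaving slack of order $(\beta-1)\eth$ to absorb stochastic fluctuations.

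The more delicate regime is near~$\partial\Theta$, where $p^-(z_i)=m\|z_i-\partial\Theta\|$ is small and grows linearly with the distance to the boundary; this is the main obstacle. Linearizing via $f(x)\geq f'(0)x-C_1 x^2$ (Assumption~E), the required inequality reduces, to leading order in the small quantity~$p^-(z_i)$, to a lower bound of the form $\bar S(z_i)\gtrsim m\|z_i-\partial\Theta\|\,e(z_i)/L_f$. Since the map $y\mapsto\|y-\partial\Theta\|$ is 1-Lipschitz and the kernel has range~$r$, the integral $\bar S(z_i)$ contributes $m\|z_i-\partial\Theta\|\rho(z_i)$ up to a boundary-truncation loss of order $mr\rmax$ and a quadratic $f$-correction of order $C_1 m^2 r^2\rmax^{2}$. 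The explicit formula $m=\emin^2\eth(\beta'-\beta)/\{4r\rmax^2 L_f(C_1\rmax+L_f)\}$ in the statement is calibrated so that these two losses together consume no more than the $(\beta'-\beta)$ share of the slack; inequalities~\Ref{lem:localLB:ineq1}--\Ref{lem:localLB:ineq2} are invoked at this step, and the crossover region where $m\|z_i-\partial\Theta\|=q_{\beta'}(z_i)$ is handled by checking both bounds give the same conclusion.

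For the concentration step, conditional on~$z_i$ the random variable~$S^-_i$ is a sum of $n-1$ independent bounded summands, each at most $(A/(n-1))\amax\cmax r^{-d}$ and with total conditional variance of order $A\amax^2\cmax\rmax r^{-d}/(n-1)$. Bernstein's inequality then yields $\pr(\bar S(z_i)-S^-_i\geq\epsilon\giv z_i)\leq 2\exp\{-C_2(n-1)r^d\epsilon^2/(A\amax^2)\}$ for a constant $C_2$ coinciding with the one in Theorem~\ref{thm:UB}. Choosing $\epsilon$ proportional to the deterministic slack preserved from the previous step, namely $C_4\emin\eth^2(\beta-1)/(\amax L_f)$, and taking a union bound over the~$n$ patches, yields the stated failure probability. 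The constant~$C_4$ appearing in~\Ref{C7-def} will emerge as the bookkeeping constant that converts the $(\beta-1)\eth$ slack into the actual admissible perturbation of $S^-_i$.
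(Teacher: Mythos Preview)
Your overall plan—showing that $p^-_{\Theta,\beta',m}$ is a sub-solution for the empirical operator~$E_n$ and invoking monotonicity—is reasonable, and the interior analysis is broadly right. The gap is in the boundary regime. You write that $\bar S(z_i)$ equals $m\|z_i-\partial\Theta\|\,\rho(z_i)$ ``up to a boundary-truncation loss of order $mr\rmax$'', to be absorbed by the $(\beta'-\beta)$ share of the slack. But near~$\partial\Theta$ the slack you actually have is proportional to $p^-(z_i)=m\|z_i-\partial\Theta\|$, and this vanishes as $z_i\to\partial\Theta$; an $O(mr)$ loss cannot be absorbed by an $O(m\|z_i-\partial\Theta\|)$ slack once $\|z_i-\partial\Theta\|\ll r$. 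In fact the truncation to~$\Theta$ produces a \emph{gain}, not a loss: because the extension $y\mapsto m(t-\|y-x\|)$ is negative outside the ball $\Theta=B_x(t)$, discarding the exterior only raises the integral. The paper exploits this via a geometric argument (cosine rule plus radial symmetry of the kernel; case~L3 of Lemma~\ref{lem:LB}), obtaining $\bar S(z_i)\geq m\|z_i-\partial\Theta\|\,\rho(z_i)+Kmr$ for a positive constant~$K$, and it is this additive gain that carries the sub-solution inequality through as $z_i\to\partial\Theta$.

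There is also a structural difference that matters for the stated exponent. The paper does not prove $E_n(p^-)\geq p^-$ directly. It introduces the fixed point $p^*_{\Theta,\beta}$ of the \emph{continuous} operator $E_{\Theta,\beta}$ (extinction rate inflated by~$\beta$) and argues in three steps: (i) $p^-_{\Theta,\beta',m}\leq p^*_{\Theta,\beta}$ deterministically (Lemmas~\ref{lem:LB} and~\ref{lem:localLB}); (ii) $p^*_{\Theta,\beta}\geq C_4\eth^2$ \emph{uniformly} on~$\Theta$, including at the boundary (Lemma~\ref{lem:ULB}); and (iii) $p^*_{i,n}\geq p^*_{\Theta,\beta}(z_i)$ with high probability, via a concentration bound (Lemma~\ref{lem:lb1}) whose threshold $(\beta-1)\emin\,C_4\eth^2/L_f$ is made available precisely by the uniform lower bound in~(ii). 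The quantity $C_4\eth^2$ is the infimum of $p^*_{\Theta,\beta}$, not of $p^-_{\Theta,\beta',m}$ (whose infimum is zero); this is where the exponent $\eth^4(\beta-1)^2$ in the theorem comes from. Your direct route, even with the boundary analysis repaired, would give a concentration threshold of order $mr\sim\eth(\beta'-\beta)$ near~$\partial\Theta$, hence an exponent different from the one asserted.
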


Theorems \ref{thm:UB} and~\ref{thm:LB} can be combined in the following corollary.

\begin{corollary}\label{cor1}
Suppose that the conditions of Theorems \ref{thm:UB} and \ref{thm:LB} hold, with $ \alpha_{2} = 1- \eth < \a_1 < 1$
and with $\b,\b'$ and~$m$ as above. 
Define $ \Theta_{m} := \{z \in \Theta :  \| z - \partial \Theta\| \geq m^{-1} \}$, where 
$\Theta = B_x(t)$ and $t > r + m^{-1}$. Then
\begin{align*}
    \lefteqn{\mathbb{P}\left(\left| p_{i,n}^{\ast} - q_{1}(z_{i}) \right| \leq \a_1^{-1}(\beta' - \alpha_1) 
                               \mbox{ for all } z_{i} \in \Theta_{m} \right) } \\
   &\quad \geq\ 1- 4n \exp\left( - \frac{C_{2}(n-1)r^d \emin^{2}}{A\amax^{2} L_{f}^{2}} 
             \left( C_4^{2}   \eth^{4} (\beta-1)^{2} \wedge  \frac{(1-\alpha_{1})^{2}}{16} \right) \right) \\
           &\hskip1.4cm            - \frac n2 \exp\left( - c_d\smin (nr^{d}/A)\right).
\end{align*}
\end{corollary}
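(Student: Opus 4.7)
\emph{Proof plan.} The plan is to intersect the high‑probability events produced by Theorems~\ref{thm:UB} and~\ref{thm:LB}, then exploit the choices $\alpha_2=1-\eth$ and $z_i\in\Theta_m$ to collapse the bounds $p^+_{\aot}$ and $p^-_{\Theta,\beta',m}$ down to the cleaner quantities $q_{\alpha_1}$ and $q_{\beta'}$, and finally to control the deterministic gap $q_{\alpha_1}(z)-q_{\beta'}(z)$ by a short argument based on the concavity of~$f$.

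First I would verify the two simplifications. For any $z\in\Theta$ we have $q_1(z)\ge\eth=1-\alpha_2$, and since $q_\alpha$ is decreasing in~$\alpha$ with $\alpha_1<1$, also $q_{\alpha_1}(z)\ge q_1(z)\ge 1-\alpha_2$; so $p^+_{\aot}(z)=q_{\alpha_1}(z)\vee(1-\alpha_2)=q_{\alpha_1}(z)$ throughout~$\Theta$. For $z_i\in\Theta_m$ we have $\|z_i-\partial\Theta\|\ge m^{-1}$, so $m\|z_i-\partial\Theta\|\ge 1\ge q_{\beta'}(z_i)$, giving $p^-_{\Theta,\beta',m}(z_i)=q_{\beta'}(z_i)$. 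Intersecting the events supplied by the two theorems therefore yields $q_{\beta'}(z_i)\le p_{i,n}^{\ast}\le q_{\alpha_1}(z_i)$ for every $z_i\in\Theta_m$. Since also $q_{\beta'}(z)\le q_1(z)\le q_{\alpha_1}(z)$ by the monotonicity of $q_\alpha$ in $\alpha$, we arrive at $|p_{i,n}^{\ast}-q_1(z_i)|\le q_{\alpha_1}(z_i)-q_{\beta'}(z_i)$.

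The central deterministic step is to show that $q_{\alpha_1}(z)-q_{\beta'}(z)\le\alpha_1^{-1}(\beta'-\alpha_1)$ pointwise. Writing $u=q_{\alpha_1}(z)$ and $v=q_{\beta'}(z)$, the defining relation \Ref{Eq-rev1} rearranges (when $u>0$) to $\alpha_1 e(z)/(1-u)=f(u\rho(z))/u$, and analogously for~$v$. Concavity of $f$ with $f(0)=0$ makes $x\mapsto f(x)/x$ non-increasing, so $f(u\rho)/u\le f(v\rho)/v$ (interpreted as $f'(0)\rho$ when $v=0$), which forces $\alpha_1(1-v)\le\beta'(1-u)$. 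Rearranging gives $u-v\le(1-u)(\beta'-\alpha_1)/\alpha_1\le\alpha_1^{-1}(\beta'-\alpha_1)$, as required.

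Finally, a union bound over the two failure events assembles the total failure probability. After factoring the common $\emin^{2}/(A\amax^{2}L_f^{2})$ out of the exponents in Theorems~\ref{thm:UB} and~\ref{thm:LB}, the two $2n\exp(\cdot)$ terms merge into the single $4n\exp(\cdot\wedge\cdot)$ term by retaining the smaller of the two rate constants $(1-\alpha_1)^{2}/16$ and $C_4^{2}\eth^{4}(\beta-1)^{2}$, while the boundary term coming from Theorem~\ref{thm:UB} is rewritten using the $r$-smooth estimate \Ref{r-smooth} to become $\tfrac{n}{2}\exp(-c_d\smin nr^{d}/A)$. The only mildly subtle point is the two-line comparison of $q_{\alpha_1}$ and $q_{\beta'}$ above; once the decreasing character of $f(x)/x$ is noted, everything else is straightforward bookkeeping.
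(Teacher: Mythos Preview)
Your overall structure matches the paper's proof exactly: intersect the events from the two theorems, observe that the choice $\alpha_2=1-\eth$ forces $p^+_{\aot}=q_{\alpha_1}$ on~$\Theta$ and that $z_i\in\Theta_m$ forces $p^-_{\Theta,\beta',m}(z_i)=q_{\beta'}(z_i)$, deduce $|p_{i,n}^*-q_1(z_i)|\le q_{\alpha_1}(z_i)-q_{\beta'}(z_i)$, and assemble the tail probabilities via a union bound and~\Ref{r-smooth}.

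The one genuine difference is how you bound the deterministic gap $q_{\alpha_1}(z)-q_{\beta'}(z)$.  The paper differentiates $q_\alpha$ in~$\alpha$ (in the spirit of Lemma~\ref{lem:Lips}) to obtain $\partial q_\alpha/\partial\alpha \ge -e(z)/\bigl(\alpha e(z)+f(\rho(z)q_\alpha(z))\bigr)$, and then integrates from~$\alpha_1$ to~$\beta'$, using $e/(ue+f)\le 1/\alpha_1$ on that range.  You instead work directly from the two fixed-point equations: with $u=q_{\alpha_1}(z)\ge v=q_{\beta'}(z)$, the identity $\alpha e/(1-q_\alpha)=f(q_\alpha\rho)/q_\alpha$ and the fact that $x\mapsto f(x)/x$ is non-increasing (concavity, $f(0)=0$) give $\alpha_1(1-v)\le\beta'(1-u)$, whence $\alpha_1(u-v)\le(1-u)(\beta'-\alpha_1)$.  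Your route is more elementary --- no implicit differentiation or smoothness of $\alpha\mapsto q_\alpha$ is needed --- and it even yields the slightly sharper intermediate bound with the extra factor $(1-u)$.  Both arguments land on the same final inequality $q_{\alpha_1}(z)-q_{\beta'}(z)\le\alpha_1^{-1}(\beta'-\alpha_1)$.
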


The inequalities (\ref{lem:ULB:ineq1})--(\ref{lem:ULB:ineq3}), (\ref{lem:localLB:ineq1}) 
and (\ref{lem:localLB:ineq2}) require that there are constants $C_q$ and~$C_{rt}$, functions only of the parameters 
in the set~$\QQ$, such that 
\eq\label{essential-conditions}
           L_{\rho} r + L_e r \Le C_q\eth(\eth \wedge  (\beta' - \beta));\quad 
      r/t \Le C_{rt}(\eth\wedge (\beta'-\beta)) .
\en 
As Corollary~\ref{cor1} shows, the approximation accuracy is better the closer $\b'$ and~$\a_1$ are to~$1$, whereas 
the probability that this accuracy is realized is reduced and the restrictions on $ L_{\rho} r + L_e r$ becomes more 
stringent as $ \b' $ and~$\a_1 $ become closer to~$1$. Furthermore,  if $\b'$ becomes closer to~$1$,  the subset 
of~$\Omega$ over which the approximation applies is smaller and the restriction on~$r/t$ becomes more stringent.

To illustrate the implications of these general results, we give a further consequence, expressed in the form of a  
limit theorem.  We think of a sequence of processes, indexed by~$n$, in which the parameters in~$\QQ$ are held 
constant, as are $L_\rho$ and~$L_e$, while the density of patches~$n/A_n$  increases.  Under such circumstances, 
it is reasonable to suppose that the colonization radius~$r_n$ decreases, since migrants can find other patches 
closer to home, but in such a way that $M_n := nr_n^d/A_n$ increases. 

The set~$\Omega_n$ is assumed to be somewhat more than $r_n$-smooth, in that we suppose that 
\[
   \Omega_n \Eq \bigcup_{i=1}^{X_n} B_{x(i,n)}(t(i,n)),
\] 
with $X_n < \infty$ and $t(i,n) \ge t_n$ for each~$i$,  where $r_n/t_n \le 1$ decreases with~$n$.  
The overlap in the union is also assumed not to be too great, in the sense that
\[
     \sum_{i=1}^{X_n} v_d \{t(i,n)\}^d \Le kA_n,
\]
for some~$k$ not depending on~$n$.  In consequence, for some~$k'$ not depending on~$n$,
\[
   N(\Omega,r_n/3) \Le k'A_n/r_n^d \Eq k'n/M_n \Eq O(n) \quad\mbox{as}\ n\to \infty.
\]

\begin{corollary}\label{cor2}
Under the above circumstances, suppose that $r_n \to 0$ as $n\to\infty$ and that~$t_n$ is bounded away from~$0$.  
Assume also that, for all~$n$, $ \eta_{\Omega_n}  \ge c_1 r_n^{\g_1} $ for some $\g_1 \in [0,1/2)$ and  $ c_{1} > 0$, 
and that
\eq\label{cor2-1}
       r_n^{2(1+\g_1)} \phi_n^2 M_n \ \ge\ c_2(\log n)^{1+\g_2},
\en
for some $c_2, \g_2 > 0$.  Then there exist constants $K_1,K_2 < \infty$ such that, 
for any sequence $\phi_n \to \infty$ such that $r_n^{1-2\g_1}\phi_n \to 0$,
\[
    \mathbb{P}\left(\left| p_{i,n}^{\ast} - q_{1}(z_{i,n}) \right| \leq K_1 r_n^{1-\g_1} \phi_n
                               \mbox{ for all } z_{i} \in \Omega'_n \right)  \ \to\ 1,
\]
where 
\[
    \Omega'_n \Def \{z \in \Omega\colon\, \|z-\partial \Theta^{(i,n)}\| \ge K_2/\phi_n \mbox{ for some } 
          1 \le i \le X_n\}.
\]
\end{corollary}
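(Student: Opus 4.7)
The plan is to apply Corollary~\ref{cor1} separately on each of the $X_n$ balls $\Theta^{(i,n)} := B_{x(i,n)}(t(i,n))$ covering $\Omega_n$, with parameters chosen as functions of $n$, and then union-bound over the balls. Every $z \in \Omega'_n$ will lie in $\Theta_{m_n}^{(i,n)}$ for some $i$ once $K_2$ is chosen appropriately.

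Concretely, I would set a global $\alpha_{2,n} := 1 - \min(\eta_{\Omega_n}, 1/4)$ (the truncation at $1/4$ ensures $\alpha_{2,n} > 1/2$ in the uninteresting case $\eta_{\Omega_n} > 1/4$), and
\[
    1 - \alpha_{1,n} \Eq \beta_n - 1 \Eq \beta'_n - \beta_n \Eq c\,\phi_n r_n^{1-\g_1}
\]
for a fixed constant $c > 0$. The Corollary~\ref{cor1} error $\alpha_{1,n}^{-1}(\beta'_n - \alpha_{1,n}) = 3c\phi_n r_n^{1-\g_1}/\alpha_{1,n}$ is then at most $6c\phi_n r_n^{1-\g_1}$ once $\alpha_{1,n} \ge 1/2$, giving the claim with $K_1 := 6c$. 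The hypotheses of Corollary~\ref{cor1} are checked one by one, using $\eta_{\Theta^{(i,n)}} \ge \eta_{\Omega_n} \ge c_1 r_n^{\g_1}$ and $r_n^{1-2\g_1}\phi_n \to 0$: the inequalities $\alpha_{2,n} \le \alpha_{1,n}$ and $\beta'_n < 1 + \eta_{\Theta^{(i,n)}}/2$ hold for large $n$, and the essential conditions~(\ref{essential-conditions})---after noting that $\eta \wedge (\beta'_n - \beta_n) = \beta'_n - \beta_n$, of order $\phi_n r_n^{1-\g_1}$---reduce to $L_\rho + L_e = O(\phi_n)$ and $r_n^{\g_1}/t_n = O(\phi_n)$, both eventually true. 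The bound $n > 2N(\Omega_n, r_n/3)$ follows from $N(\Omega_n, r_n/3) \le k'n/M_n$ with $M_n \to \infty$, while $L_f \rmax/\emin > 1/2$ follows from $\eta_{\Omega_n} > 0$, which implies $L_f\rho(z) > \emin$ at some $z$. A direct computation gives $m_n \ge K_0 \phi_n$ for a constant $K_0 > 0$ (depending on the elements of $\QQ$, on $c_1$, and on $c$), so setting $K_2 := K_0^{-1}$ places $\Omega'_n$ inside $\bigcup_i \Theta_{m_n}^{(i,n)}$ and yields $t(i,n) \ge t_n > r_n + m_n^{-1}$ eventually.

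For the failure probability, under our scaling $\eta^4(\beta_n - 1)^2$ is of order $\phi_n^2 r_n^{2(1+\g_1)}$ and $(1 - \alpha_{1,n})^2$ of order $\phi_n^2 r_n^{2(1-\g_1)}$, so the minimum $C_4^2 \eta^4 (\beta_n - 1)^2 \wedge (1-\alpha_{1,n})^2/16$ is of order $\phi_n^2 r_n^{2(1+\g_1)}$ (since $\g_1 \ge 0$). Multiplied by $M_n$, assumption~(\ref{cor2-1}) makes the first exponent in Corollary~\ref{cor1} at least a constant times $(\log n)^{1+\g_2}$, so $4n\exp(\cdot) \to 0$ at super-polynomial rate. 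For the second tail, $r_n^{2(1+\g_1)}\phi_n^2 = (r_n^{1-2\g_1}\phi_n)^2 r_n^{6\g_1} \to 0$ (and equals $(r_n\phi_n)^2$ when $\g_1 = 0$), so (\ref{cor2-1}) forces $M_n \gg (\log n)^{1+\g_2}$ and $(n/2)\exp(-c_d\smin M_n) \to 0$. Finally, $X_n \le k A_n/(v_d t_n^d) = O(A_n) = O(n)$ under the assumed overlap bound, so the union bound over balls contributes only a polynomial factor in $n$, which is absorbed by the super-polynomial decay.

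The main obstacle is purely bookkeeping: the single condition $r_n^{1-2\g_1}\phi_n \to 0$ together with~(\ref{cor2-1}) must simultaneously control the ordering of $\alpha_{1,n}, \alpha_{2,n}, \beta_n, \beta'_n$ relative to $\eta$, both essential conditions in~(\ref{essential-conditions}), the smallness of $m_n^{-1}$ relative to $t_n$, and both tail terms of Corollary~\ref{cor1}. The crucial algebraic fact is $\g_1 < 1/2$, which forces $\beta'_n - \beta_n \ll \eta$ for large $n$, so that the minima in~(\ref{essential-conditions}) and in the probability exponent always select the $\phi_n r_n^{1-\g_1}$-sized term. No deeper ingredient is required beyond this and the geometric assumptions on $\Omega_n$.
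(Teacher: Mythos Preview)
Your proposal is correct and follows essentially the same route as the paper's proof: apply Corollary~\ref{cor1} with the choices $1-\alpha_{2,n}=\eta_{\Omega_n}$ and $1-\alpha_{1,n}=\beta_n-1=\beta'_n-\beta_n\asymp \phi_n r_n^{1-\gamma_1}$, then verify that the hypotheses hold for large~$n$ and that the exponential tails vanish via~\eqref{cor2-1}. Your write-up is in fact more careful than the paper's in two places: you make explicit the union bound over the $X_n=O(n)$ covering balls, and you explain why the second exponential term $(n/2)\exp(-c_d\smin M_n)$ tends to zero by showing $r_n^{2(1+\gamma_1)}\phi_n^2\to 0$ forces $M_n\gg(\log n)^{1+\gamma_2}$; the paper simply asserts these. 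One small omission: you list the checks for Theorem~\ref{thm:LB} via~\eqref{essential-conditions} but do not explicitly mention inequality~\eqref{UB:ineq1} for Theorem~\ref{thm:UB}; this is routine (left side $O(r_n)$, right side $\gtrsim \phi_n r_n$) and falls out by the same reasoning.
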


Thus, under such conditions, the error in the approximation converges to zero with $r_n^{1-\g_1}\phi_n$, and 
the proportion of~$\Omega_n$ for which the approximation does not hold converges to zero with~$1/\phi_n$ 
--- faster, if $t_n \to \infty$. When~$\eta_{\Omega_n} \to 0$ as $n\to\infty$,  the uniform precision is reduced.  
However, Corollary~\ref{cor2} could still be applied to any sequence of subsets $\widetilde\Omega_n \subset \Omega_n$ 
for which $\eta_{\widetilde\Omega_n}$ remains bounded away from zero, showing that the error is typically 
smaller where~$q_1(z)$ is larger.  Indeed, this illustrates the flexibility of our theorems.

Analogous results can also be proved in the limit in which the landscape becomes smoother, much
as in \citet{OC:06}, without necessarily requiring that $r_n \to 0$.  Assume instead that
\[
     s_n \Def \max\{r_n/t_n,\,(L_\rho^{(n)} + L_e^{(n)})r_n\} \ \to\ 0 \quad\mbox{as }n \to \infty.
\]
Then the statement of Corollary \ref{cor2} holds, with $s_n$ in place of~$r_n$.

\section{Discussion}

Both upper and lower bounds require the functions $e$ and~$\rho$ to be smooth.  One biologically relevant situation 
in which this need not be the case would be for terrestrial animals on islands, where, at the boundaries between 
sea and land, the patch density~$\s$ can be expected to change abruptly from a positive value to~$0$.  This is not 
a problem for the lower bound, since
the argument can be carried out within any subset of the region~$\Omega$ on which the functions $\rho$ and~$e$ are 
smooth.  For the upper bound, the argument given can be applied to any island~$\Omega'$ over which the functions 
$\rho$ and~$e$ are smooth, if~$\s(z) = 0$ at all points outside~$\Omega'$ to a distance of at least~$r$;  the proof 
of Theorem~\ref{thm:UB} indeed assumes that there is no contribution to the metapopulation coming from 
outside~$\Omega$.  On the intervening parts of~$\Omega$, in which the patch density~$\s$ is zero, there are no 
patches whose probability of occupancy is to be bounded.  If there are boundaries across which the values of the 
functions $\rho$ and~$e$ change abruptly, but not because~$\s$ jumps to zero, the upper bound argument would have 
to be modified in much the same spirit as that for the lower bound, but we have not attempted to do this.

Even when the region containing the habitat patches is connected, parts of the metapopulation can be rendered 
effectively disconnected by regions of low patch density, low colonisation rates and high extinction rates. In such 
circumstances, the deterministic metapopulation model may still possess a unique non-zero equilibrium. However,  
if the associated stochastic metapopulation model is initially in a state in which only some of the
viable regions are occupied, the remaining viable regions are likely to remain uncolonized for a very long time, 
so that the stochastic metapopulation model has quasi-equilibria that are not well approximated by the equilibria of the
deterministic system.

Our bounds on the equilibrium can be used to deduce bounds on the rate at which the metapopulation returns 
to equilibrium after a small displacement.  \citet{OH:02} refer to this as the `characteristic response time'.
 Near equilibrium, the continuous time system (\ref{Eq3}) can be appoximately expressed as
\[
\frac{d(p_{i}(t) - p_{i}^{\ast})}{dt}\ \approx\ \mathcal{J}(p^{\ast}) (p(t) - p^{\ast}),
\]
where
\[
  \mathcal{J}(p)\ =\ \left\{ 
    \begin{array}{ll}
             - \left( f (S_{i}(p^{\ast})) + e(z_i)\right), & i=j \\
            f^{\prime}(S_{i}(p^{\ast}))(1-p_{i}^{\ast}) \frac{A}{(n-1)} a(z_{j}) c(z_{i},z_{j};r), & i \neq j.
    \end{array}
  \right.
\]
% For appropriate choice parameters $ p^{-}_{\Theta,\beta,m} \leq p^{\ast} \leq p^{+}_{\alpha_{1},\alpha_{2}} $. 
Now suppose that we have functions $p^-$ and~$p^+$ such that $p^-(z_i) \le p_i^* \le p^+(z_i)$ for all
$1\le i\le n$.
Since, under Assumption~E, $ f $ is increasing and concave, 
$ \mathcal{J}(p^{+}) \leq \mathcal{J} ( p^{\ast}) \leq \mathcal{J}(p^{-}) $, 
where the inequality is interpreted elementwise. 
% As there exists a constant $ C_{5} $ such that 
Now, for any~$C_5$ chosen larger than $\max_{1\le i\le n}\{f(S_i(p^+))+ e(z_i)\}$, the matrix
$ C_{5} I + \mathcal{J}(p^{+}) $ is non-negative and primitive. 
By \citet[Theorem 1.1(e) of Chapter 1]{Seneta:81} it follows that 
$ \lambda (\mathcal{J}(p^{+})) \leq \lambda(\mathcal{J} ( p^{\ast})) 
\leq \lambda(\mathcal{J}(p^{-}) ) $, where $ \lambda(\cdot) $ denotes the leading eigenvalue. 
Thus we are able to bound the `characteristic response time' $ 1/\lambda(\mathcal{J}(p^{\ast})) $
of \citet{OH:02}, using $p^-$ and~$p^+$. 
 A similar argument can be made for the discrete time 
system (\ref{Eq2}), provided that $ f(S_{i}(p^{+})) + e(z_i) \leq 1 $ for all~$i$;
this ensures that the Jacobian is a non-negative matrix.  

If the function $ f $ is linear and $e(z) = \nu$ is constant in~$z$, $ q_{1}(z) $ 
as defined in Section~\ref{Sect3} is a concave function of $ \rho(z)$, provided that $ \rho(z) > \nu/L_f $. 
Jensen's inequality then implies that the equilibrium probability of patch occupancy, averaged over 
a region in which~$\rho(z)$ is uniformly above $\nu/L_f $, is smaller than the equilibrium probability 
of patch occupancy in a 
landscape with a constant colonisation rate equal to the spatial average. In this sense, spatial variability 
reduces the occupation level of the metapopulation when~$ f $ is linear. 
However, % $ q_{1}(z) $ is concave 
for strictly concave~$ f $ satisfying Assumption~E, 
$q_1(z)$ is not necessarily concave whenever $\rho(z) > \nu/L_f$.  
% In particular, if $f''(0) < 0$, $q_1(z)$ is not concave at points at which
% the colonisation rate is too close to $\nu/L_f$. % (see Lemma \ref{lem:concave}).
 %\adbr{Whatever we conclude about other functions~$f$}.

In the model that we discuss, randomness appears only through the positions of the patches in the smooth landscape. 
However, it would also be interesting to allow for the possibility that, although the landscape is smooth `on average', 
individual patches may have properties that differ from the average; for instance, the local extinction rates could 
be modelled as being random, with a mean that varies smoothly within the region.  It would also be interesting to 
allow the patch locations to be chosen as a sample from a point process with more dependence structure.

Another way in which additional randomness could be incorporated into the landscape is by allowing the landscape 
to change over time,
as in \cite{CO:08}. A common model for landscape dynamics is to allow habitat patches to change between 
`suitable' and `unsuitable' states, following a Markov chain (for example, \cite{KMVHL:00}). When a habitat patch 
becomes `unsuitable', any local population occupying that habitat patch becomes extinct, and the patch cannot be 
recolonised until it becomes `suitable' again. \citet{XFAS:06} incorporated this type of dynamics into the 
spatially realistic Levins model. Letting $ h(t,z_{i}) $ denote the probabilty that the habitat patch at 
$ z_{i} $ is `suitable', their system of equations becomes
\[
    \frac{dp_{i}(t)}{dt} \Eq f(S_{i}(p(t)))(h(t,z_{i})-p_{i}(t)) - \te(z_{i}) p_{i}(t),
\]
where~$\te(z_i)$ incorporates the rate of destruction of habitat patch~$i$, in addition to the rate of local extinction
$e(z_i)$ at patch~$i$.
Since $ h(t,z) $ converges to some  $ h(z) $ as $ t\rightarrow \infty$, the equilibrium probabilities for the spatially 
realistic Levins model with landscape dynamics are a fixed point of the function~$\tE_n$ given by
\[
\tE_{n}(p)_{i} \Def 
        \frac{h(z_{i}) f\left((A/(n-1)) \sum_{j\neq i} a(z_{j}) c(z_{i},z_{j};r) p_{j}\right)}
             { \te(z_{i}) + h(z_{i}) f\left((A/(n-1)) \sum_{j\neq i} a(z_{j}) c(z_{i},z_{j};r) p_{j}\right)}.
 \] 
It would thus be relatively straightforward to extend our analysis to bound the equilibria
of the deterministic model in \citet{XFAS:06}. In particular, if $ f $ is linear, 
then the equilibrium is approximated by $ 1  - \te(z_{i})/(h(z_{i})\rho(z_{i})) $.

\section{Appendix: Auxiliary results}
\setcounter{equation}{0}

Define the function $ F(\cdot;\tau,\nu)\colon\,[0,\infty) \rightarrow [0,1] $ by
\[
    F(x;\tau,\nu) \Def \frac{f(\tau x)}{\nu + f(\tau x)}.
\]
This function has a fixed point at $ 0 $ and, if $ f^{\prime}(0) \tau > \nu$, then it also has a non-zero fixed point. The function  $ q_{\alpha}(z) $ is the largest fixed point of $ F(\cdot;\rho(z),\alpha e(z)) $, for fixed $ \alpha > 0 $. 

\begin{lemma} \label{lem:Fbound2}
Suppose that Assumption E holds. Let $ q $ denote the largest fixed point of $ F(\cdot;\tau,\nu) $. 
Then $ q \leq x $ if   $ (1-x) f(\tau x) \leq \nu x $ and $ q \geq x $ if  $ (1-x) f(\tau x) \geq \nu x $. 
\end{lemma}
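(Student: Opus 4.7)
The idea is to convert the fixed-point problem into the sign analysis of a single auxiliary function. First I would observe that a point $x \ge 0$ is a fixed point of $F(\cdot;\tau,\nu)$ exactly when
\[
   g(x) \Def (1-x) f(\tau x) - \nu x \Eq 0,
\]
since the identity $x(\nu + f(\tau x)) = f(\tau x)$ rearranges to $(1-x) f(\tau x) = \nu x$. In particular the largest fixed point $q$ is the largest zero of $g$, and the two hypotheses of the lemma are precisely $g(x) \le 0$ and $g(x) \ge 0$, respectively.

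I would then pin down the sign pattern of $g$. Using $f(0) = 0$ from Assumption~E one has $g(0) = 0$, while $g(1) = -\nu < 0$ and $g(x) \le -\nu x < 0$ for every $x \ge 1$, so all zeros of $g$ lie in $[0,1]$. The central observation is that $g(x)/x$ is non-increasing on $(0,\infty)$. Writing
\[
  \frac{g(x)}{x} \Eq \tau\,\frac{f(\tau x)}{\tau x} - f(\tau x) - \nu,
\]
the first summand is non-increasing because $y\mapsto f(y)/y$ is non-increasing on $(0,\infty)$ for any concave $f$ with $f(0)=0$ (the secant slope from the origin does not increase with $y$), and the second summand is non-increasing because $f$ is increasing.

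Since $g(x)/x$ is non-increasing on $(0,\infty)$, the super-level set $\{x>0 : g(x) \ge 0\}$ is either empty or an interval of the form $(0,q^{\ast}]$ whose right endpoint is the largest positive zero of $g$. In the first case there is no non-zero fixed point and $q=0$; in the second case $q=q^{\ast}$. Either way, for every $x>0$ one has the equivalences $g(x)\ge 0 \iff x\le q$ and $g(x)\le 0 \iff x\ge q$, from which both assertions of the lemma follow by unfolding the definition of $g$.

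The argument is essentially routine. The only non-trivial input is the monotonicity of $f(y)/y$, which is immediate from concavity of $f$ and $f(0)=0$, so I do not anticipate any real obstacle.
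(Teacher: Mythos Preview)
Your approach is sound and genuinely different from the paper's. The paper works with $G(x):=F(x;\tau,\nu)-x$, shows that $G$ is concave (because $F$ is) and strictly concave at~$0$, with $G(0)=0$ and $G(\infty)=-\infty$; from this it reads off that $G$ has at most one positive zero~$q$, with $G>0$ on $(0,q)$ and $G<0$ on $(q,\infty)$. You bypass the concavity of the composite~$F$ altogether and analyse instead the ratio $g(x)/x$ for $g(x)=(1-x)f(\tau x)-\nu x$, using only the elementary facts that $f(y)/y$ is non-increasing and $f$ is increasing. This is a pleasant simplification: the two auxiliary functions have the same sign (indeed $G(x)=g(x)/(\nu+f(\tau x))$), but your route needs nothing beyond the concavity of~$f$ itself.

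There is one small gap. From $g(x)/x$ being merely \emph{non-increasing} you correctly deduce that $\{x>0:g(x)\ge0\}$ is either empty or equals $(0,q]$, which gives the implication $g(x)\ge0\Rightarrow x\le q$. But the other half of the lemma, $g(x)\le0\Rightarrow q\le x$, does not follow from this alone: if $g(x)/x$ were constant (and zero) on a subinterval of $(0,q]$, you could have $g(x_0)=0$ with $0<x_0<q$. The remedy is to observe that $g(x)/x=\tau\,\dfrac{f(\tau x)}{\tau x}-f(\tau x)-\nu$ is in fact \emph{strictly} decreasing on $(0,\infty)$: wherever $f$ is strictly increasing the term $-f(\tau x)$ is strictly decreasing, and on any interval where $f$ is constant, say $f\equiv M>0$, the first term becomes $M/x$, which is strictly decreasing. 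With strict monotonicity the positive zero of~$g$ is unique, and both implications follow as you claim.
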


\begin{proof}
Since $ f $ is concave, increasing and not identically zero, by Assumption~E, $ F(\cdot;\tau,\nu) $ is concave, and strictly concave at~$0$. Hence $g(x) := F(x;\t,\n) - x$ is concave, strictly concave at zero, and has $g(0)=0$ and $g(\infty) = -\infty$. 
If $g'(0) = F'(0;\t,\n) - 1 \le 0$, there is thus no other solution to $g(x)=0$. If $g'(0) > 0$, there is exactly one other solution~$q$, and $g(x) > 0$ for $0 < x < q$, and $g(x) < 0$ for $x > q$.  Thus, $0 < x < q$ if and only if $g(x) > 0$, and so
\[
    F(x;\t,\n) \Eq \frac{f(\tau x)}{\nu + f(\tau x)} \ >\  x,
\]
implying that $(1-x)f(\tau x) > \nu x$; similarly, $q < x$ if and only if $g(x) < 0$ and $(1-x)f(\tau x) < \nu x$.

\end{proof}

\begin{lemma} \label{lem:Lips}
Suppose that Assumptions C, D and E  hold. Then, for all $\a \ge 1/2$, $ q_{\alpha} $ is Lipschitz continuous on $ \{z \in \Omega : q_{\alpha}(z) > 0\} $, with Lipschitz constant at most~$L_q$, as defined in~\Ref{lem:dif:eq4}.
\end{lemma}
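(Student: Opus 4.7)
My plan is to apply the implicit function theorem to the defining relation $\alpha e(z) q_{\alpha}(z) = f(q_{\alpha}(z)\rho(z))(1-q_{\alpha}(z))$, extract a pointwise bound on $\nabla q_{\alpha}$ on the open set $\{q_{\alpha} > 0\}$, and then upgrade this to a Lipschitz estimate on $\{q_{\alpha}>0\}$ by exploiting the continuity of $q_{\alpha}$ extended by zero to all of $\Omega$.

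First I would fix $z$ with $q := q_{\alpha}(z) > 0$ and set $g(x) := F(x;\rho(z),\alpha e(z)) - x$. The analysis already carried out in the proof of Lemma~\ref{lem:Fbound2} shows that $g$ is concave, with $g(0) = 0 = g(q)$, $g'(0) > 0$, and $g > 0$ on $(0,q)$. A brief concavity argument then rules out $g'(q) \ge 0$: if $g'(q) = 0$, monotonicity of $g'$ gives $g' \ge 0$ on $[0,q]$, forcing $g(x) \le g(q) = 0$ on $[0,q]$, contradicting $g > 0$ on $(0,q)$; and $g'(q) > 0$ immediately contradicts $g(q)=0$ together with $g > 0$ just below $q$. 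Hence $g'(q) < 0$, so the implicit function theorem yields $q_{\alpha} \in C^{1}$ on $\{q_{\alpha} > 0\}$.

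Next I would differentiate $\alpha e(z) q(z) = f(q(z)\rho(z))(1-q(z))$ along an arbitrary direction $v$ and collect terms in $\nabla q \cdot v$:
\begin{equation*}
\bigl[\alpha e + f(q\rho) - f'(q\rho)\rho(1-q)\bigr]\,(\nabla q \cdot v) \Eq f'(q\rho)\,q(1-q)\,(\nabla\rho \cdot v) - \alpha q\,(\nabla e \cdot v).
\end{equation*}
The fixed-point equation gives $\alpha e + f(q\rho) = \alpha e/(1-q)$, so the bracket collapses to $[\alpha e - f'(q\rho)\rho(1-q)^{2}]/(1-q)$. Concavity of $f$ yields $f'(q\rho)\,q\rho \le f(q\rho) = \alpha e q/(1-q)$, whence $f'(q\rho)\rho(1-q)^{2} \le \alpha e(1-q)$, so the bracket is bounded below by $\alpha e q/(1-q) > 0$. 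Using $f'(q\rho) \le f'(0) = L_f$, $|\nabla e| \le L_{e}$, $|\nabla\rho| \le L_{\rho}$, $e \ge \emin$, and $\alpha \ge 1/2$,
\begin{equation*}
|\nabla q \cdot v| \Le \frac{(L_{f} L_{\rho}(1-q) + \alpha L_{e})(1-q)}{\alpha e}\,\|v\| \Le \frac{2L_{f} L_{\rho} + L_{e}}{\emin}\,\|v\|,
\end{equation*}
so that $\|\nabla q_{\alpha}(z)\| \le L_q/\sqrt{d} \le L_{q}$.

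Finally, to promote the pointwise gradient bound to a Lipschitz estimate on the possibly disconnected set $\{q_{\alpha} > 0\}$, I would observe that $q_{\alpha}$ extends continuously to $\Omega$ by setting it to zero wherever $L_{f}\rho(z) \le \alpha e(z)$. For $z,z' \in \{q_{\alpha} > 0\}$ with segment $\gamma(t) = (1-t)z + tz'$, set $t^{*} := \inf\{t : q_{\alpha}(\gamma(t)) = 0\}$ and $t_{*} := \sup\{t : q_{\alpha}(\gamma(t)) = 0\}$, with the convention that $t^{*} = 1$ and $t_{*} = 0$ if the segment avoids the zero set (in which case the estimate follows directly from the gradient bound along $\gamma$). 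Each of the half-open sub-segments corresponding to $[0,t^{*})$ and $(t_{*},1]$ lies in a single connected component of $\{q_{\alpha}>0\}$, so integrating the gradient bound gives $q_{\alpha}(z) \le L_{q} t^{*}\|z-z'\|$ and $q_{\alpha}(z') \le L_{q}(1-t_{*})\|z-z'\|$; summing yields $|q_{\alpha}(z) - q_{\alpha}(z')| \le L_{q} \|z-z'\|$. The main obstacle is the second step: the bracket $\alpha e + f(q\rho) - f'(q\rho)\rho(1-q)$ is awkward, and one must combine the fixed-point identity with concavity of $f$ both to simplify it and to absorb the $(1-q)^{2}$ factor cleanly; the set-theoretic wrinkle at the end is routine once continuity of $q_{\alpha}$ on all of $\Omega$ is noted.
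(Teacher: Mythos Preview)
Your argument is correct and follows essentially the paper's route --- apply the implicit function theorem to the fixed-point relation and use concavity of $f$ to control the implicit derivative --- with two minor differences worth noting. First, you differentiate the rearranged identity $\alpha e q = (1-q)f(q\rho)$ and bound $\nabla q_\alpha \cdot v$ for an arbitrary direction~$v$, whereas the paper differentiates $q = F(q;\rho,\alpha e)$ coordinate by coordinate; your route avoids an unnecessary loss and in fact yields $\|\nabla q_\alpha\| \le L_q/\sqrt{d}$, a factor $\sqrt{d}$ sharper than the constant the paper records. Second, you add a closing paragraph to upgrade the pointwise gradient bound to a global Lipschitz estimate on the possibly non-convex set $\{q_\alpha > 0\}$, a point the paper simply glosses over; your segment-splitting argument is sound provided the straight segment from $z$ to $z'$ stays inside $\Omega$, which is not guaranteed since $\Omega$ is only assumed connected, not convex. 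That residual gap is shared by the paper's own proof, and in any case the later applications in the paper only invoke the Lipschitz bound locally (for $\|y-z\|\le r$), where it is unproblematic.
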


\begin{proof} 
We write $ F_\a(q,z) := F(q;\rho(z),\alpha e(z)) $, where $F_\a\colon\,[0,1] \times \re^d \to \re_+$. For functions $g\colon\,[0,1] \times \re^d \to \re_+$, we denote by~$D_q g$ the partial derivative of~$g$ with respect to its first argument, and by~$D_j g$  the partial derivative in the direction of the $j$-th coordinate axis in~$\re^d$, $1\le j\le d$. By the implicit function theorem, $ q_{\alpha}(z) $ is continuously differentiable in an open neighbourhood of $ z $, with 
\begin{equation}
    (D_j q_\a)(z) \Eq - \left[(D_q F_\a)(q_{\alpha}(z),z) -1\right]^{-1} 
                (D_j F_\a)(q_{\alpha}(z),z),\quad 1\le j\le d, \label{lem:dif:eq0}
\end{equation}
provided that
\begin{equation}
  (D_q F_\a)(q_{\alpha}(z),z)\ \neq\ 1. \label{lem:dif:eq1}
\end{equation}
For any $ z \in \Omega$ and $q \in [0,1]$, 
\begin{align*}
   (D_q F_\a)(q,z) & \Eq \frac{\alpha e(z) \rho(z) f^{\prime}(q \rho(z))}{\left(\alpha e(z) + f(q\rho(z))\right)^{2}}.
\end{align*}
As $ q_{\alpha}(z) = F(q_{\alpha}(z);\alpha e(z),\rho(z)) $, 
\begin{align*}
   (D_q F_\a)(q_{\alpha}(z),z) & \Eq 
    \frac{(1-q_{\alpha}(z)) \rho(z) f^{\prime}(q_{\alpha}(z) \rho(z))}{\left(\alpha e(z) + f(q_{\alpha}(z)\rho(z))\right)}  \\
   & \Eq  \frac{(1-q_{\alpha}(z)) q_{\alpha}(z)\rho(z) f^{\prime}(q_{\alpha}(z) \rho(z))}{ f(q_{\alpha}(z)\rho(z))}.
\end{align*}
By the mean value theorem, there exists a $ \tilde{q} \in (0,q_{\alpha}(z)) $ such that 
\[
      f^{\prime}(\tilde{q} \rho(z)) \Eq \frac{f(q_{\alpha}(z)\rho(z))}{q_{\alpha}(z) \rho(z)}.
\]
As $ f $ is concave, $ f^{\prime}(\tilde{q} \rho(z)) \geq f^{\prime}(q_{\alpha}(z) \rho(z)) $. Therefore,
\begin{equation}
   (D_q F_\a)(q_{\alpha}(z),z)  \Le 1-q_{\alpha}(z),  \label{lem:dif:eq2}
\end{equation}
and (\ref{lem:dif:eq1}) holds for any $ z \in \Omega $ such that $ q_{\alpha}(z) > 0 $. Differentiating~$ F_\a $ in direction~$j$ yields
\begin{align*}
   (D_j F_\a)(q,z) & \Eq \frac{\alpha e(z) q (D_j \rho)(z) f^{\prime}(q\rho(z)) - \alpha (D_j e)(z) f(q\rho(z))}
           {\left(\alpha e(z) + f(q\rho(z)) \right)^{2}}.
\end{align*}
Evaluating this derivative at $ (q_{\alpha}(z),z) $ gives
\begin{align}
    (D_j F_\a)(q_{\alpha}(z),z) & \Eq \frac{\alpha e(z) q_{\alpha}(z) (D_j \rho)(z) f^{\prime}(q_{\alpha}(z)\rho(z))
         - \alpha(D_j e)(z) f(q_{\alpha}(z) \rho(z))}{\left(\alpha e(z) + f(q_{\alpha}(z)\rho(z)) \right)^{2}} \nonumber\\
    & \Eq q_{\alpha}(z)\frac{(1-q_{\alpha}(z))  (D_j \rho)(z) f^{\prime}(q_{\alpha}(z)\rho(z)) 
         - \alpha (D_j e)(z) }{\left(\alpha e(z) + f(q_{\alpha}(z)\rho(z)) \right)}. \label{lem:dif:eq3}
\end{align}
Combining equations (\ref{lem:dif:eq0}) and (\ref{lem:dif:eq3}) with the bound (\ref{lem:dif:eq2}) yields 
\begin{align*}
     \left|(D_j q_\a)(z) \right| & \Le \frac{1}{e(z)} 
        \left( \frac{L_f}{\alpha} \left|(D_j \rho)(z) \right| + \left| (D_j e)(z) \right| \right)
            \Le \frac1{\emin}(\a^{-1}L_fL_\rho + L_e).
\end{align*}
Therefore, for any $\a \ge 1/2$, $ q_{\alpha} $ is Lipschitz on $ \{z\in\Omega: q_{\alpha} > 0\} $ with the 
Lipschitz constant given in~(\ref{lem:dif:eq4}). 
\end{proof}

\begin{lemma} \label{lem:Fbound}
Suppose that Assumption~E holds and that $ q_{1}(z) \geq \eta > 0$. Then, for any $ \b \in (1, (1-\eta)^{-1}) $, 
$ q_{\b}(z) \geq  \b\eta + 1-\b $, and, for any $ \alpha \in (0,1) $, $ q_{\alpha}(z) \geq \alpha \eta $.
\end{lemma}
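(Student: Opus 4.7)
The plan is to reduce both inequalities, via Lemma~\ref{lem:Fbound2}, to verifying an inequality of the form $(1-x)f(x\rho(z)) \ge \gamma e(z) x$ for a specific trial value of $x$, and then to derive that inequality from the hypothesis $q_1(z) \ge \eta$ using only concavity of $f$ together with $f(0)=0$.

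First I would extract the consequence of the hypothesis. Applying Lemma~\ref{lem:Fbound2} with $(\tau,\nu) = (\rho(z),e(z))$ and $x = \eta$, the bound $q_1(z) \ge \eta$ gives
\[
   (1-\eta) f(\eta \rho(z)) \ \geq\ e(z)\,\eta.
\]
This is the only input from the hypothesis that the argument will use.

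Next I would treat the case $\alpha \in (0,1)$. The goal is to show $(1 - \alpha\eta) f(\alpha\eta\rho(z)) \geq \alpha e(z) \cdot \alpha\eta$. Since $f$ is concave with $f(0)=0$, I have $f(\lambda y) \ge \lambda f(y)$ for $\lambda \in [0,1]$, applied with $\lambda = \alpha$ and $y = \eta\rho(z)$, giving $f(\alpha\eta\rho(z)) \ge \alpha f(\eta\rho(z))$. Combining this with $1-\alpha\eta \ge 1-\eta$ and the extracted inequality yields
\[
    (1-\alpha\eta) f(\alpha\eta\rho(z)) \ \geq\ \alpha (1-\eta) f(\eta\rho(z)) \ \geq\ \alpha e(z)\eta \ \geq\ \alpha^2 e(z)\eta,
\]
since $\alpha < 1$. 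Lemma~\ref{lem:Fbound2}, applied now with $\nu = \alpha e(z)$, then yields $q_\alpha(z) \ge \alpha\eta$.

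For $\beta \in (1,(1-\eta)^{-1})$, set $x := \beta\eta + 1 - \beta = 1 - \beta(1-\eta)$, which lies in $(0,\eta)$ precisely because of the constraint on $\beta$. Since $0 < x < \eta$, the concavity bound $f(\lambda y) \ge \lambda f(y)$ with $\lambda = x/\eta$ gives $f(x\rho(z)) \ge (x/\eta)f(\eta\rho(z))$. Substituting the extracted inequality $f(\eta\rho(z)) \ge e(z)\eta/(1-\eta)$, and using $1 - x = \beta(1-\eta)$,
\[
  (1-x) f(x\rho(z)) \ \geq\ \frac{(1-x)\,x}{\eta}\,\frac{e(z)\eta}{1-\eta} \Eq \beta\,e(z)\,x.
\]
Lemma~\ref{lem:Fbound2} then yields $q_\beta(z) \ge x = \beta\eta + 1 - \beta$, as required.

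The main obstacle is minor: it is recognizing that the right trial point for the $\beta$ case is the linear interpolant $1 - \beta(1-\eta)$, which turns the algebra into an exact cancellation of the $(1-\eta)$ factors. Everything else is a routine application of $f(\lambda y) \ge \lambda f(y)$ for concave $f$ with $f(0)=0$.
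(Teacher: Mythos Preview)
Your proof is correct and follows essentially the same route as the paper: both extract $(1-\eta)f(\eta\rho(z)) \ge e(z)\eta$ from Lemma~\ref{lem:Fbound2}, use the concavity bound $f(\lambda y)\ge \lambda f(y)$ with the trial points $x=\alpha\eta$ and $x=\beta\eta+1-\beta$, and then invoke Lemma~\ref{lem:Fbound2} again to conclude. The only differences are cosmetic rearrangements of the same algebra.
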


\begin{proof}
For any $ \b \in (1, (1-\eta)^{-1}) $, it follows that $0 < \b \eta + 1- \b < \eta $ and that, by Assumption~E,
\[
    f\left( \rho(z)(\b \eta + 1 - \b)  \right)\ \geq\ \left( \b + \frac{1-\b}{\eta}\right) f(\rho(z) \eta). 
\]
As $ q_{1}(z) \geq \eta $, we can apply Lemma \ref{lem:Fbound2} to give $ f(\rho(z) \eta) \ge e(z)\eta/(1-\eta)$, and hence
\eqs
    f\left( \rho(z)(\b \eta + 1 - \b)  \right) 
          &\ge& \left( \b + \frac{1-\b}{\eta}\right) \frac{e(z)\eta}{1-\eta} \\
        &=& (\b\eta + 1 - \b) \frac{e(z)}{1-\eta}\
          \Eq    \frac{\b e(z)(\b \eta + 1 - \b)}{1- (\b \eta + 1 - \b)}.
\ens
Applying Lemma~\ref{lem:Fbound2} again, we see that $ q_{\b}(z) \geq \b\eta +1 -\b$. 

For $ \alpha \in (0,1)$ we follow similar reasoning to show that 
$$
   f(\rho(z) \alpha \eta)\ \geq\ \alpha f(\rho(z) \eta)\ \geq\ \alpha e(z) \eta/(1-\eta)\ 
       \geq\ \alpha e(z) \alpha \eta/ (1-\alpha \eta), 
$$ 
and applying Lemma~\ref{lem:Fbound2} we see that $ q_{\alpha}(z) \geq \alpha\eta $.
\end{proof}

\ignore{
\begin{lemma} \label{lem:concave}
If $ f $ is concave, then $ q $ is a concave function of $ \tau $ on $ [q^{-1}(a),q^{-1}(b)] $ if and only if 
\[
[f(x)]^{3} \geq \nu^{2} [ f(x) - xf^{\prime}(x)] 
\]
on $ [a, b] $.
\end{lemma}
\begin{proof}
For any $ \delta>0 $ and $ k = 1,2,\ldots $ define 
\begin{align*}
a_{k} & := k f((k-1)\delta) - (k-1)f(k\delta), & c_{k} & := k\delta  \\
b_{k} & := \delta^{-1}\left[ f(k\delta) - f((k-1)\delta)\right],  & d_{k} & := f(k\delta),
\end{align*}
with $ c_{0} = d_{0} = 0 $. As $ f $ is concave, the $ b_{k} $ are decreasing and the $ a_{k} $ are increasing. The function $ f_{\delta}(x) := \sum_{k=1}^{\infty} \left(a_{k} + b_{k} x\right) \mathbb{I}(c_{k-1} \leq x < c_{k}) $ is the piecewise linear interpolation of $ f $ at points $ \{k\delta\}_{k=0}^{\infty} $. The inverse of $ f_{\delta} $ is 
\[
g_{\delta}(y) := \sum_{k=1}^{\infty} b_{k}^{-1} (y-a_{k})\mathbb{I}(d_{k-1} \leq y < d_{k}) .
\]
For $\nu$ given and fixed $ \delta $, let $q_{\delta}(\tau) $ denote the largest solution to the equation
\[
   q = \frac{f_{\delta}(q\t)}{\nu + f_{\delta}(q\t)}.
\]
This can be inverted to give $\t = G_{\delta}(q;\nu)$, where 
\begin{align*}
     G_{\delta}(q;\nu) &:= \frac1q g_{\delta}(q\nu/(1-q)) \\
     & = \sum_{k=1}^{\infty} \left(\frac{\nu}{b_{k}} (1-q)^{-1} - \frac{a_{k}}{b_{k}}q^{-1}\right) \mathbb{I}\left(\frac{d_{k-1}}{\nu + d_{k-1}} \leq q \leq \frac{d_{k}}{\nu + d_{k}} \right),
\end{align*}
so $q_{\delta}(\t)$ is concave in~$\t$ if and only if~$G_{\delta}(q;\nu) $ is convex in $ q $. Differentiating $ G_{\delta}(q;\nu) $ with respect to $ q $ on $ (\frac{d_{k-1}}{\nu + d_{k-1}}, \frac{d_{k}}{\nu + d_{k}}) $ gives  
%For $ \frac{d_{k-1}}{\nu + d_{k-1}} < q < \frac{d_{k}}{\nu + d_{k}}$ we can differentiate $ G_{\delta}(q;\nu) $ with respect to $ q $ 
\begin{align*}
G^{\prime}_{\delta}(q;\nu) & = \frac{\nu}{b_{k}} (1-q)^{-2} + \frac{a_{k}}{b_{k}}q^{-2} \\
G^{\prime\prime}_{\delta}(q;\nu) & = \frac{2\nu}{b_{k}} (1-q)^{-3} - \frac{2a_{k}}{b_{k}}q^{-3} .
\end{align*} 
 For $ G_{\delta}(q;\nu) $ to be convex on $ [a,b] $ it is necessary that $ d_{k-1}^{3} \geq a_{k} \nu^{2} $ and 
\begin{equation}
G^{\prime}\left(\frac{d_{k}}{\nu + d_{k}} - \right) \leq G^{\prime}\left(\frac{d_{k}}{\nu + d_{k}} + \right), \label{lem:concave:eq1}
\end{equation}
for all $ k $ such that $ k\delta \in [a,b] $. Inequality (\ref{lem:concave:eq1}) holds as the $ a_{k} $ are increasing and the $ b_{k} $ are decreasing. Letting $ \delta \to 0 $, the inequality $ d_{k-1}^{3} \geq a_{k} \nu^{2} $ for $ k\delta \in [a,b] $ becomes $ [f(x)]^{3} \geq \nu^{2} \left[f(x) - xf^{\prime}(x)\right] $ for $ x \in [a,b] $. So $ G(q;\nu) := \lim_{\delta\to 0} G_{\delta}(q;\nu) $ is convex on $ [a,b] $ if and only if  $ [f(x)]^{3} \geq \nu^{2} \left[f(x) - xf^{\prime}(x)\right] $ for all $ x \in [a,b] $.
\end{proof}
}

In the following we let $ \sigma_{n\setminus i} := \frac{A}{(n-1)}\sum_{j\neq i} \d_{z_j}$, which is 
$ A $ times the empirical measure of patches excluding patch $ i $.

\begin{lemma} \label{lem:emp}

Suppose that Assumptions A, B and D hold. Then, for any $ h\colon\, \Omega \rightarrow [0,H] $, 
 $0 <  \ddd \le H\cmax\smax v_d$  and  $ z \in \Omega $,
\[
  \mathbb{P} \left(  \left|\int c(z,y;r) h(y) [\sigma_{n\setminus i}(dy) - \sigma(y)\,dy] \right|
    \ \geq\ \ddd\right)  \leq  2 \exp\left( - C_2 ((n-1) r^d/A) (\ddd/H)^{2}\right),
\]
where 
\[
    C_2 \Def \{3\{\cmax\}^2 \smax v_d\}^{-1}.
\]
\end{lemma}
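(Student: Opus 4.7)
My plan is to reduce the statement to a classical Bernstein-type concentration bound for a sum of $(n-1)$ i.i.d.\ bounded random variables, with a carefully sharpened variance estimate that exploits the bounded support of the kernel~$c_z$.  First, I would set $W_j := c(z, z_j; r) h(z_j)$ for $j\neq i$.  Since the $z_j$'s are i.i.d.\ with density $A^{-1}\sigma$, we have $\ex W_j = A^{-1}\int c(z,y;r)h(y)\sigma(y)\,dy$, and so
\[
    \int c(z,y;r) h(y)\,[\sigma_{n\setminus i}(dy) - \sigma(y)\,dy]
       \Eq \frac{A}{n-1}\sum_{j\neq i}(W_j - \ex W_j),
\]
reducing the probability in question to $\pr\bigl(\bigl|\sum_{j\ne i}(W_j - \ex W_j)\bigr| \geq s\bigr)$ with $s := (n-1)t/A$.

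Next I would estimate the size and variance of each summand.  Directly, $|W_j - \ex W_j| \leq \cmax H r^{-d} =: M$ from the pointwise bounds $c(z,\cdot;r) \leq \cmax r^{-d}$ and $0 \le h \le H$.  The key trick for the variance is to use one factor of $c$ to absorb the size and the other to obtain integrability:
\[
    \var(W_j)\ \Le\ \ex[W_j^2]\ \Le\ \cmax H r^{-d}\,\ex W_j
        \ \Le\ \frac{\cmax H r^{-d}}{A}\int c(z,y;r)h(y)\sigma(y)\,dy.
\]
Using Assumption~B (support inside a ball of radius~$r$) together with the change of variables $y = z + r u$ gives
\[
    \int c(z,y;r)\,\sigma(y)\,dy\ \Le\ \smax \cmax v_d,
\]
and hence $\var(W_j) \leq H^2 \cmax^2 \smax v_d / (A r^d) =: V/(n-1)$.

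Finally I would apply Bernstein's inequality in the form
\[
    \pr\Bigl(\Bigl|\sum_{j\ne i}(W_j - \ex W_j)\Bigr| \geq s\Bigr)
      \Le 2\exp\Bigl(-\frac{s^2/2}{V + Ms/3}\Bigr).
\]
The hypothesis $t \leq H\cmax\smax v_d$ is exactly what is needed to ensure $Ms \leq V$, so $V + Ms/3 \leq 4V/3$, after which inserting the values of $s$, $V$ and~$M$ yields an exponent of $-\,3(n-1)r^d t^2 / \bigl(8 A H^2 \cmax^2 \smax v_d\bigr)$, which is stronger than the claimed bound (the constant $3/8$ has been relaxed to $1/3$ for a cleaner statement).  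No step is really delicate here; the one piece of care required is in squeezing the extra factor of~$r^d$ out of the variance via the $\cmax H r^{-d}\cdot\cmax\smax v_d$ split, so that the eventual exponent scales like $(n-1)r^d/A$ rather than $(n-1)/A$.
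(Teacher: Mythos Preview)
Your proposal is correct and follows essentially the same route as the paper: both write the quantity as $\frac{A}{n-1}\sum_{j\ne i}(W_j-\ex W_j)$ for i.i.d.\ summands $W_j=c(z,z_j;r)h(z_j)$, bound $|W_j|\le H\cmax r^{-d}$ and $\var(W_j)\le H^2\cmax^2\smax v_d/(Ar^d)$ via the compact support of~$c_z$, and then apply a Bernstein-type inequality (the paper cites McDiarmid~\cite[Theorem~2.7]{McDiarmid:98}), using the hypothesis $t\le H\cmax\smax v_d$ to absorb the linear term $Ms/3$ into the variance term. Your constant $3/8$ versus the paper's $1/3$ is just a slightly less wasteful simplification of the same denominator.
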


\begin{proof}
Note first that, for patches distributed independently with density $A^{-1}\s(\cdot)$, we have 
\[
    \ex\left\{\int c(z,y;r) h(y) \sigma_{n\setminus i}(dy) \right\}
         \Eq \int c(z,y;r) h(y) \s(y)\,dy.
\]
The left hand side of this expression is a sum of \iid\ random variables, each bounded by $H\cmax A/((n-1)r^d)$,
and each with variance at most $\{H\cmax A/((n-1)r^d)\}^2 \smax v_d r^d/A$, where, as before, $v_d$ denotes the volume
of the unit ball in~$\re^d$. Hence, applying \citet[Theorem 2.7]{McDiarmid:98}, it follows that, for
any $\ddd > 0$,
\eqs
   \lefteqn{\pr\left( \left|\int c(z,y;r) h(y) \sigma_{n\setminus i}(dy) - \int c(z,y;r) h(y) \s(y)\,dy \right| > \ddd\right)}\\
      &&\Le 2\exp\left( - \frac{\ddd^2}{2((n-1)\{H\cmax A/((n-1)r^d)\}^2 \smax v_d A^{-1}r^d
                      + \{H\cmax A/((n-1)r^d)\}\ddd/3)} \right) \\
      &&\Le 2\exp\left( - \frac{\ddd^2}{3(A/(n-1)r^d)\{H\cmax\}^2 \smax v_d } \right)\\
      &&\Eq 2\exp\left( - C_2((n-1)r^d/A)(\ddd/H)^2 \right)
\ens
if $\ddd/H \le \cmax\smax v_d$.
\end{proof}

\begin{lemma} \label{lem:T}
Suppose Assumptions A, B and D  hold. Let $ N(\Omega,r) $ be the number of balls of radius $ r $ required 
to cover $ \Omega $. If $ n > 2 N(\Omega,r/3) $, then $ T $ is primitive with probability at least 
\eq \label{lem:T:eq1}
1 - N(\Omega,r/3) \exp\left( - n \min_{z\in\Omega} A^{-1}\int_{\Omega} \mathbb{I}(\|y-z\|\leq r/3) \sigma(y) dy\right)
\en
\end{lemma}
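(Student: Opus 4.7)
The plan is to identify a high-probability event on which primitivity of~$T$ follows deterministically, and then to bound its failure probability by a union bound. I would fix a minimum cover $\{B_{y_k}(r/3)\}_{k=1}^{N}$ of $\Omega$ with $N \Def N(\Omega,r/3)$ balls centred at points $y_k \in \Omega$, and let $E_k$ be the event that $B_{y_k}(r/3)$ contains at least one patch. Since the patches are placed independently with density $A^{-1}\sigma$ on $\Omega$, the chance that any given patch falls in $B_{y_k}(r/3)\cap\Omega$ is at least
\[
p_\ast \Def \min_{z\in\Omega} A^{-1}\int_\Omega \mathbb{I}(\|y-z\|\le r/3)\sigma(y)\,dy,
\]
so $\pr(E_k^c) \le (1-p_\ast)^n \le e^{-np_\ast}$, and a union bound over $k$ yields $\pr(\bigcup_k E_k^c) \le N e^{-np_\ast}$, which is precisely the bound~(\ref{lem:T:eq1}).

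The remainder of the argument is deterministic on $E \Def \bigcap_k E_k$. By Assumption~B, $T_{ij} > 0$ if and only if $\|z_i - z_j\| < r$, so the support of $T$ is symmetric; primitivity therefore reduces to showing that the undirected graph $G$ on the patches with edges $\{i,j\}$ whenever $\|z_i - z_j\| < r$ is connected and has an odd cycle. The hypothesis $n > 2N$ is the key to aperiodicity: if every cover ball held at most two patches, the total number of patch--ball incidences would be at most $2N < n$, which contradicts the fact that every patch lies in at least one cover ball. Hence some $B_{y_k}(r/3)$ must contain three patches; these are pairwise within $2r/3 < r$ and so form a triangle in $G$. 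Combined with the $2$-cycles inherent in the symmetric support, this gives $\gcd(2,3)=1$ as the period.

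For the connectedness step I would chain any two patches $z_i, z_j$ along a continuous path in $\Omega$, using the fact that patches lying in a single radius-$r/3$ cover ball already constitute a clique in $G$ (pairwise distance at most $2r/3 < r$) and splicing these local cliques together by walking across the cover. This is the main obstacle: a naive traversal through merely overlapping radius-$r/3$ balls only forces consecutive ball--patches to lie within $4r/3$, which is too large. The resolution is to take, or refine, the cover so that successive centres lie within $r/3$ of one another, after which patches in adjacent balls are guaranteed to be within $r/3 + r/3 + r/3 = r$ of each other and the chain becomes a genuine path in $G$; such a cover can be chosen of size $O(N(\Omega,r/3))$ for the connected domain $\Omega$, which preserves the probability bound derived above.
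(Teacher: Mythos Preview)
Your approach coincides with the paper's: fix an $r/3$-cover of $\Omega$ by $N=N(\Omega,r/3)$ balls with centres $y_1,\dots,y_N\in\Omega$; bound by a union bound the probability that some cover ball is empty; use pigeonhole from $n>2N$ to find three patches in one ball and hence a triangle for aperiodicity; and chain through the cover for irreducibility. The only divergence is precisely at the step you single out as the main obstacle. The paper does not treat it as an obstacle: it defines an auxiliary graph $\hat{\mathcal G}$ on the centres $y_k$ with an edge whenever $\|y_i-y_j\|\le r/3$, \emph{asserts} that $\hat{\mathcal G}$ is connected because $\Omega$ is connected, and then runs exactly your chaining, bounding consecutive patch distances by $r/3+r/3+r/3=r$. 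So the ``$4r/3$'' difficulty you raise is bypassed by declaration rather than argument; the paper gives no reason why a minimal cover should have centres this close, and for some minimal covers it can fail (two touching $r/3$-balls covering a short segment have centres $2r/3$ apart).

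Your remedy of passing to a refined cover with connected $r/3$-centre graph is reasonable, but it inflates the union-bound prefactor from $N$ to the size of the refined cover; your claim that this size is $O(N(\Omega,r/3))$ is not justified, and even if granted it does not reproduce the constant $N(\Omega,r/3)$ in~\Ref{lem:T:eq1} exactly. For the paper's applications (where this prefactor is immediately replaced by $n/2$) a constant factor would be harmless, but the lemma as literally stated is not recovered by your fix --- nor, strictly speaking, by the paper's unproved assertion.
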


\begin{proof}
Let $ \tilde{T} $ be the incidence matrix of $ T $, that is $ \tilde{T}_{ij} = 1 $ if $ T_{ij} > 0 $ and 
$ T_{ij} = 0 $ otherwise. The matrix $ T $ is primitive if $ \tilde{T} $ is both irreducible and 
acyclic \cite[Theorem 1.4 of Chapter 1]{Seneta:81}. By Assumptions $ B $ and $ D $, $ \tilde{T} $ is 
symmetric and $ \tilde{T}_{ii} = 0 $. Define the graph $ \mathcal{G} = (V,E) $ where $ V:= \{z_{1},\ldots,z_{n}\} $ 
and $ (z_{i},z_{j}) \in E $ if and only if $ \|z_{i} - z_{j} \| \leq r $. The matrix $ \tilde{T} $ is the 
adjacency matrix of $ \mathcal{G} $ and is irreducible if $ \mathcal{G} $ is connected.  
Let $ N:= N(\Omega,r/3) $ and $ y_{1},\ldots,y_{N} \in \Omega $ such that $ \Omega \subset \cup_{i}^{N} B(y_{i},r/3) $, 
where $ B(y,r) $ is a closed ball of radius $ r $ centered at $ y $. Define the graph 
$ \hat{\mathcal{G}} = (\hat{V},\hat{E}) $ where $ \hat{V} = \{y_{1},\ldots,y_{N}\} $ and $ (y_{i},y_{j}) \in \hat{E} $ 
if and only if $ \|y_{i} - y_{j} \| \leq r/3 $. Since $ \Omega $ is connected, the graph $ \hat{\mathcal{G}} $ 
is also connected. Suppose that each ball $ B(y_{i}, r/3) $ contains at least one element of $ V $. 
For any $ z_{i} $ and $ z_{j} $, there exists a path $ \{y_{k_0},y_{k_1},\ldots, y_{k_{m+1}}\} $ in 
$ \hat{\mathcal{G}} $ such that $ z_{i} \in B(y_{k_0},r/3) $ and $ z_{j} \in B(y_{k_{m+1}},r/3) $. 
Taking any $ z_{k_\ell} \in B(y_{k_\ell},r/3) $, we have constructed a path $ \{z_{i},z_{k_1},\ldots,z_{k_m},z_{j}\} $ 
in $ \mathcal{G} $, since 
\[
\| z_{k_\ell} - z_{k_{\ell+1}} \| \leq  \| z_{k_\ell} - y_{k_\ell} \| + \| y_{k_\ell} - y_{k_{\ell+1}} \| + \| y_{k_{\ell+1}} - z_{k_{\ell+1}} \| \leq r.
 \] 
Thus $ \mathcal{G} $ is connected and $ \tilde{T} $ is irreducible if  each ball $ B(y_{i}, r/3) $ contains at 
least one element of $ V $. This occurs with probability at least that given in (\ref{lem:T:eq1}).

To show that $ \tilde{T} $ is acyclic, it is sufficient to show that $ \tilde{T}^{2}_{ii} > 0 $ 
and $ \tilde{T}^{3}_{ii} > 0 $ for some~$i $, since $ \tilde{T} $ is irreducible \cite[Lemma 1.2 of Chapter 1]{Seneta:81}. 
This is true if there are three elements of $ V$  that are within distance~$r$ of each other. Since $ n > 2N(\Omega,r/3) $, 
there is at least one $ B(y_{i},r/3) $ which contains at least three elements of $ V $, and these are within distance
$2r/3$ of each other, completing the proof.
\end{proof}

\section{Appendix: Proof of the upper bound}
\setcounter{equation}{0}

In this section, we prove Theorem~\ref{thm:UB}. Suppose
\begin{equation}
    f\left(\int a(y) c(z_{i},y;r) p_{\aot}^{+}(y) \sigma_{n\setminus i}(dy)\right) 
         \ \leq\ \frac{e(z_{i}) p_{\aot}^{+}(z_{i})}{1-p_{\aot}^{+}(z_{i})} \label{lem:ub:eq0}
\end{equation}
for all $ i = 1,\ldots,n $. Then $ E_{n} (p_{\aot}^{+}(z))_{i} \leq p_{\aot}^{+}(z_{i}) $ for all $ i=1,\ldots,n$. As $ E_{n} $ is monotone, the sequence of iterates of $ E_{n} $ starting from $ p_{\aot}^{+}(z_{i}),\ i=1,\ldots,n$ is decreasing. If $ T $ is primitive, then the cone limit set trichotomy \cite[Theorem 6.3]{HS:05} holds and each sequence of iterates starting from a non-zero inital value must converge to $ p^{\ast} $. Hence, $ p_{\aot}^{+}(z_{i}),\ i=1,\ldots,n$ is an upper bound on $ p^{\ast}$.  The matrix $ T $ is primitive with high probability by Lemma \ref{lem:T}.  It remains to show that for some $1/2 < \a_2 \le \a_1 < 1$ inequality (\ref{lem:ub:eq0}) holds. 

Since $ c(z,y;r) = 0 $ for all $ y $ such that $ \| y-z\| > r $, and since $p_{\aot}^{+} $ is Lipschitz with constant~$L_q$,
as given in~\Ref{lem:dif:eq4}, we have
\begin{align}
    f\left(\int a(y)c(z,y;r) p_{\aot}^{+}(y) \sigma_{n\setminus i}(dy)\right) 
        & \Le f\left(\int a(y)c(z,y;r) \left[p_{\aot}^{+}(z) + L_q r\right] \sigma_{n\setminus i}(dy)\right) \nonumber\\
        & \Le f\left(\rho_{n\setminus i}(z) p_{\aot}^{+}(z)\right) + L_{f} L_q \rho_{n\setminus i}(z) r, \label{lem:ub:eq1}
\end{align}
for $\a_{2} > 1/2$, where $ \rho_{n\setminus i}(z) := \int a(y) c(z,y;r) \sigma_{n\setminus i}(dy) $. 

{\bf U1.} For all $ z $ such that $ q_{\alpha_{1}}(z) < 1 - \alpha_{2} $,
\begin{align}
   \lefteqn{ \frac{e(z) p_{\aot}^{+}(z)}{1-p_{\aot}^{+}(z)} 
             - f\left(\int a(y)c(z,y;r) p_{\aot}^{+}(y) \sigma_{n\setminus i}(dy)\right)} \nonumber \\
  & \geq\ 
      \frac{e(z) (1-\alpha_{2})}{\alpha_{2}} - f(\rho_{n}(z) (1-\alpha_{2})) -  L_{f} L_q \rho_{n\setminus i}(z) r \nonumber \\
  & \geq\ \frac{e(z) (1-\alpha_{2})}{\alpha_{2}} - f(\rho(z) (1-\alpha_{2}))  
          - L_{f} ((1-\alpha_{2}) + L_q r) \left|\rho(z) - \rho_{n\setminus i}(z) \right|-  L_{f} L_q \rho(z) r. \label{lem:ub:eq2}
\end{align}
From Lemma \ref{lem:Fbound2}, if $ q_{\alpha_{1}}(z) \leq (1-\alpha_{2}) $, then $ f(\rho(z)(1-\alpha_{2})) \leq \alpha_{1}(1-\alpha_{2}) e(z)/\alpha_{2} $. Combining this bound with inequality (\ref{lem:ub:eq2}) gives
\begin{align}
  \lefteqn{ \frac{e(z) p_{\aot}^{+}(z)}{1-p_{\aot}^{+}(z)} 
           - f\left(\int a(y)c(z,y;r) p_{\aot}^{+}(y) \sigma_{n\setminus i}(dy)\right)} \nonumber\\
  & \geq\ \frac{e(z)(1-\alpha_{1})(1-\alpha_{2})}{\alpha_{2}} - L_{f} ((1-\alpha_{2}) + L_qr) \left|\rho(z) - \rho_{n\setminus i}(z) \right| -  L_{f} L_q \rho(z) r \nonumber\\
  & \geq\ p_{\aot}^{+}(z) \left(\frac{(1-\alpha_{1}) e(z)}{\alpha_{2}} -  \frac{L_{f} L_q r \rho(z) }{ p_{\aot}^{+}(z)}   
     -  L_{f} \left(1 + \frac{L_q r}{ p_{\aot}^{+}(z)} \right) \left|\rho(z) - \rho_{n\setminus i}(z) \right| \right), 
               \label{lem:ub:eq4a}
\end{align}
where the last inequality follows as $ q_{\alpha_{1}}(z) < 1-\alpha_{2} $ implies $ p_{\aot}^{+}(z) = 1-\alpha_{2} $. 

{\bf U2.} We now consider the case where $ q_{\alpha_{1}} (z) \geq 1- \alpha_{2} $. Using the fact that $ q_{\alpha_{1}}(z) $ is a fixed point of $ F(\cdot;\rho(z),\alpha_{1} e(z)) $ and inequality (\ref{lem:ub:eq1}), it follows that
\begin{align}
   \lefteqn{ \frac{e(z) p_{\aot}^{+}(z)}{1-p_{\aot}^{+}(z)} 
           - f\left(\int a(y) c(z,y;r) p_{\aot}^{+}(y) \sigma_{n\setminus i}(dy)\right)} \nonumber \\
   & \geq\ \frac{f(\rho(z) q_{\alpha_{1}}(z))}{\alpha_{1}} - f(\rho_{n\setminus i}(z) q_{\alpha}(z)) - L_{f}L_q r \rho_{n\setminus i}(z) \nonumber \\
   & \geq\ \frac{(1-\alpha_{1})}{\alpha_{1}} f(\rho(z) q_{\alpha_{1}}(z)) - L_{f} L_q r \rho(z)  
             - L_{f} (q_{\alpha}(z) + L_q r) \left| \rho(z)  - \rho_{n\setminus i}(z)\right|.  \label{lem:ub:eq3}
\end{align}
Since $ f(\rho(z) q_{\alpha_{1}}(z)) = \alpha_{1} e(z) q_{\alpha_{1}}(z)/(1-q_{\alpha_{1}}(z)) \geq \alpha_{1} e(z) q_{\alpha_{1}}(z) $ and 
$ q_{\alpha_{1}}(z) \geq 1-\alpha_{2}$ here, inequality (\ref{lem:ub:eq3}) becomes
\begin{align}
   \lefteqn{ \frac{e(z) p_{\aot}^{+}(z)}{1-p_{\aot}^{+}(z)} 
       - f\left(\int a(y) c(z,y;r) p_{\aot}^{+}(y) \sigma_{n\setminus i}(dy)\right)} \nonumber \\
   & \geq\ q_{\alpha_{1}}(z) \left((1-\alpha_{1}) e(z)  - \frac{L_{f} L_q r \rho(z) }{q_{\alpha_{1}}(z)} 
     - L_{f} \left(1 + \frac{L_q r}{q_{\alpha_{1}}(z)}\right) \left| \rho(z)  - \rho_{n\setminus i}(z)\right|  \right) \nonumber \\
   & \geq\ p^{+}_{\alpha_{1},\alpha_{2}}(z) \left((1-\alpha_{1}) e(z)  - \frac{L_{f} L_q  r \rho(z) }{p_{\aot}^{+}(z)} 
        - L_{f} \left(1 + \frac{L_q r}{p_{\aot}^{+}(z)}\right) \left| \rho(z)  - \rho_{n\setminus i}(z)\right|  \right).  
           \label{lem:ub:eq4b}
\end{align}

{\bf U3.} 
Combining inequalities (\ref{lem:ub:eq4a}) and (\ref{lem:ub:eq4b}), we see that inequality (\ref{lem:ub:eq0}) holds if 
\[
(1-\alpha_{1}) e(z_{i})  - \frac{L_{f} L_q r \rho(z_{i})}{p_{\aot}^{+}(z_{i})} - 
    L_{f} \left(1 + \frac{L_q r}{p_{\aot}^{+}(z_{i})}\right) \left| \rho(z_{i})  - \rho_{n\setminus i}(z_{i})\right|   \geq 0
\]
which is equivalent to
\[
\frac{(1-\alpha_{1}) e(z_{i}) p_{\aot}^{+}(z_{i})  - L_{f} L_q r \rho(z_{i})}
        {L_{f}\left(p_{\aot}^{+}(z_{i}) + L_qr\right)}  
          \ \geq\ \left| \rho(z_{i})  - \rho_{n\setminus i}(z_{i})\right|.
\]
By Lemma \ref{lem:Fbound}, $ p_{\aot}^{+}(z)  \geq \bigl(\alpha_{1} \eta_{\Omega}  \vee (1-\alpha_{2})\bigr) $, and from inequality~(\ref{UB:ineq1}) together with $L_f\rmax/\emin > 1/2$, we see that inequality~(\ref{lem:ub:eq0}) is satisfied if 
\[
     \frac{(1-\alpha_{1}) \emin}{4L_{f}}\ \geq\ \left| \rho(z_{i})  - \rho_{n\setminus i}(z_{i})\right|.
\]
Applying Lemma~\ref{lem:emp} yields the bound
\begin{align*}
    \lefteqn{\mathbb{P} \left( \max_{i}  \left|\int a(y)c(z_{i},y;r) [\sigma_{n\backslash i}(dy) - \sigma(y)\,dy]  \right| 
     \le  \frac{(1-\alpha_{1})\emin}{4L_{f}}\right)}\\
   &  \geq\ 1 - n \sup_{z\in \Omega} \mathbb{P} \left( \left|\int a(y)c(z,y;r) [\sigma_{n\backslash i}(dy) - \sigma(y)\,dy] 
             \right|  > \frac{(1-\alpha_{1})\emin}{4L_{f}} \right) \\
   & \geq\ 1 - 2n \exp\left( -C_2\{(n-1)r^d/A\}\frac{\emin^{2} (1-\alpha_{1})^{2}}{16 \amax^2 L_{f}^{2}}  \right),
\end{align*}
if $\frac{(1-\alpha_{1})\emin}{4L_{f}} \le \rmax$, which is the case if $L_f\rmax/\emin > 1/2$.

The situation in which $L_f\rmax/\emin \le 1/2$ is one in which the metapopulation is nowhere viable, so the conclusion is not surprising. We begin by noting that $q_\a(z) = 0$ for all $\a > 1/2$ if  $L_f\rmax/\emin \le 1/2$,  so that $p^{+}_{\alpha_{1},\alpha_{2}}(z) = (1-\a_2)$ for all $z\in\Omega$. Lemma~\ref{lem:emp} with $t = \rmax/2$ then shows that
\[
   \left| \rho(z_{i}) - \rho_{n\setminus i}(z_{i})\right| \Le  \frac{1}{2} \rmax,
\]
on an event of probability at least
\[
   1 - 2 \exp\left\{ -C_2 \frac{(n-1) r^d}{A} \Bigl(\frac{ \rmax}{2 \amax}\Bigr)^2    \right\}.
\]
Hence, on this event, we have
\eqs
f\left((1-\a_2)\int a(y)c(z_{i},y;r)  \sigma_{n\setminus i}(dy)\right) 
           \Le \frac32(1-\a_2)L_f \rmax   \Le (1-\a_2)\emin \Le \frac{e(z_i)(1-\a_2)}{\a_2}.
\ens
This establishes~\Ref{lem:ub:eq0}, on an event with probability as given in Theorem~\ref{thm:UB}, for any choice of $1/2 < \a_2 < 1$, since $p^{+}_{\alpha_{1},\alpha_{2}}(z) = (1-\a_2)$ for all $z\in\Omega$.  This completes the proof of Theorem~\ref{thm:UB}.

\section{Appendix: Proof of the lower bound} \label{sec:LB}
\setcounter{equation}{0}

To find a good lower bound on $ p^{\ast}_{n} $, we introduce a modification of $ E_{n} $. For any 
$ \Theta \subseteq \Omega $ and $ \beta >1 $ define the operator $ E_{n,\Theta,\beta}: [0,1]^{n} \rightarrow [0,1]^{n} $ 
by
\[
  E_{n,\Theta,\beta}(p)_{i} \Def  
        \frac{f\left((A/(n-1)) \sum_{j\neq i} a(z_{j}) c(z_{i},z_{j};r) \mathbb{I}(z_{j} \in \Theta) p_{j}\right)}
    { \beta e(z_{i}) + f\left((A/(n-1)) \sum_{j\neq i} a(z_{j}) c(z_{i},z_{j};r) \mathbb{I}(z_{j} \in \Theta) p_{j}\right)}.
\]
Denote the largest fixed point of $ E_{n,\Theta,\beta} $ by $ p_{n,\Theta,\beta}^{\ast} $. Since $ f $ is an 
increasing function, for any $ \Theta \subseteq \Omega $ and any $ \beta >1 $, 
$ E _{n,\Theta,\beta} (p) \leq E_{n,\Theta,1}(p) \leq E_{n}(p)$ for all  $ p \in [0,1]^{n} $,  which 
implies that $ p_{n,\Theta,\beta}^{\ast} \leq p^{\ast}_{n,\Theta,1} \leq p_{n}^{\ast} $. Thus a lower bound 
on $ p_{n,\Theta,\beta}^{\ast} $ yields a lower bound on $ p_{n}^{\ast} $. To construct a lower bound on 
$ p_{n,\Theta,\beta}^{\ast} $, we examine the limiting form of $ E_{n,\Theta,\beta} $ as $ n \rightarrow \infty $. 
Let $ C^{+}(\Theta) $ be the set of non-negative functions on $ \Theta $ and define $ E_{\Theta,\beta} \colon\, C^{+}(\Theta) \rightarrow C^{+}(\Theta) $ by 
\[
   E_{\Theta,\beta}(p) \Def  
       \frac{f\left(\int a(y) c(z,y;r) \mathbb{I}(y \in \Theta) p(y) \sigma(dy) \right)}
      { \beta e(z) + f\left(\int a(y) c(z,y;r) \mathbb{I}(y \in \Theta) p(y) \sigma(dy) \right)}.
\]
Let  $ p_{\Theta,\beta}^{\ast} $ denote the largest fixed point of $ E_{\Theta,\beta} $. Our aim now is to find 
a $ \beta > 1 $ such that with high probability
\begin{equation}
    p_{i,n}^{\ast}\ \geq\ p^{\ast}_{\Theta,\beta}(z_{i}), \label{LB:eq1}
\end{equation}
for all $ z_{i} \in \Theta $. 

\begin{lemma} \label{lem:lb1}
Suppose that Assumptions A, B, D and E hold. Suppose also that, for a given $ \Theta \subseteq \Omega $ and $ \beta > 1 $, 
there exists an $ \epsilon_{\Theta,\beta} > 0 $ such that $ p_{\Theta,\beta}^{\ast} (z) \geq \epsilon_{\Theta,\beta} $ 
for all $ z \in \Theta $. Assume that 
\begin{align}
  \emin \epsilon_{\Theta,\b} (\beta-1) &\Le L_f\rmax. \label{LB:ineq3}
\end{align}
Then
\begin{equation}
    \mathbb{P}\left(p_{i,n}^{\ast} \geq p^{\ast}_{\Theta,\beta}(z_{i}) \mbox{ for all } z_{i} \in \Theta \right) 
    \ \geq\ 1- 2n \exp\left( -C_2 \frac{(n-1)r^d}{A}
                \frac{ \emin^2 \epsilon_{\Theta,\beta}^{2} (\beta-1)^{2}}{4\amax^2L_{f}^{2}} \right).
      \label{LB:eq2}
\end{equation}
\end{lemma}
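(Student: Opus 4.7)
The plan is to pass through the auxiliary operator $E_{n,\Theta,1}$. By the section preamble, $p_n^{\ast} \geq p_{n,\Theta,1}^{\ast}$, so it suffices to prove $p_{n,\Theta,1,i}^{\ast} \geq p_{\Theta,\beta}^{\ast}(z_i)$ for every $z_i \in \Theta$ on a good event. Define $q_i^{(0)} := p_{\Theta,\beta}^{\ast}(z_i) \mathbb{I}(z_i \in \Theta)$. If, on that event, $E_{n,\Theta,1}(q^{(0)}) \geq q^{(0)}$ coordinate-wise, then monotonicity of $E_{n,\Theta,1}$, boundedness of $[0,1]^n$, and continuity of~$f$ render the iterates $q^{(k+1)} := E_{n,\Theta,1}(q^{(k)})$ non-decreasing and convergent to a fixed point $q^{\infty} \geq q^{(0)}$, which is dominated by $p_{n,\Theta,1}^{\ast}$ and hence by $p_n^{\ast}$.

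\textbf{Key inequality.} Writing $\tilde\rho_\Theta(z) := \int a(y) c(z,y;r) \mathbb{I}(y\in\Theta) p_{\Theta,\beta}^{\ast}(y) \sigma(y)\,dy$ and $\tilde\rho_{n\setminus i,\Theta}(z)$ for its empirical analogue built from $\sigma_{n\setminus i}$, the fixed-point identity $p_{\Theta,\beta}^{\ast} = E_{\Theta,\beta}(p_{\Theta,\beta}^{\ast})$ together with a cross-multiplication reduces $E_{n,\Theta,1}(q^{(0)})_i \geq q_i^{(0)}$, for $z_i \in \Theta$, to
\[
     \beta f\bigl(\tilde\rho_{n\setminus i,\Theta}(z_i)\bigr) \ \geq\ f\bigl(\tilde\rho_\Theta(z_i)\bigr).
\]
The factor $\beta$ on the left is precisely the slack coming from the mismatched denominators $e(z_i)$ in the discrete operator versus $\beta e(z_i)$ in the continuous one. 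For $z_i \notin \Theta$ the inequality is trivial since $q_i^{(0)} = 0$.

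\textbf{Closing step and main obstacle.} Lipschitz continuity of~$f$ with constant~$L_f$, together with the lower bound $f(\tilde\rho_\Theta(z_i)) \geq \beta \emin \epsilon_{\Theta,\beta}$ obtained from the fixed-point identity and $p_{\Theta,\beta}^{\ast}(z_i) \geq \epsilon_{\Theta,\beta}$, implies after a short rearrangement that the displayed inequality holds whenever $|\tilde\rho_{n\setminus i,\Theta}(z_i) - \tilde\rho_\Theta(z_i)| \leq t := (\beta-1)\emin \epsilon_{\Theta,\beta}/(2L_f)$. Assumption~(\ref{LB:ineq3}) ensures $t \leq \rmax$, so Lemma~\ref{lem:emp}, applied with $h(y) := a(y) \mathbb{I}(y\in\Theta) p_{\Theta,\beta}^{\ast}(y) \in [0,\amax]$, provides for each~$i$ probability at least $1 - 2\exp(-C_2 (n-1)r^d A^{-1} (t/\amax)^2)$ for the concentration event; a union bound over $i=1,\ldots,n$ yields exactly the asserted probability. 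The conceptual obstacle is the asymmetry: comparing $p_{n,\Theta,\beta}^{\ast}$ directly with $p_{\Theta,\beta}^{\ast}$ would require the one-sided event $\tilde\rho_{n\setminus i,\Theta} \geq \tilde\rho_\Theta$, which concentration cannot deliver, whereas routing through $E_{n,\Theta,1}$ with the safety margin~$\beta$ retained only on the continuous side trades this one-sided demand for a two-sided fluctuation question that Lemma~\ref{lem:emp} does handle.
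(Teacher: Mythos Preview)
Your proof is correct and follows essentially the same route as the paper's: reduce to showing $E_{n,\Theta,1}(q^{(0)}) \ge q^{(0)}$, use the fixed-point identity for $p^*_{\Theta,\beta}$ to extract the slack factor $(\beta-1)$, and then apply Lemma~\ref{lem:emp} with $h(y)=a(y)\mathbb{I}(y\in\Theta)p^*_{\Theta,\beta}(y)$ and a union bound. The paper invokes Brouwer rather than monotone iteration, and works with the equivalent form $e(z_i)p^*_{\Theta,\beta}(z_i)/(1-p^*_{\Theta,\beta}(z_i)) \le f(\tilde\rho_{n\setminus i,\Theta}(z_i))$ instead of your cross-multiplied inequality $\beta f(\tilde\rho_{n\setminus i,\Theta})\ge f(\tilde\rho_\Theta)$, but these are algebraically the same condition; indeed the paper's own calculation actually yields the threshold $(\beta-1)\emin\epsilon_{\Theta,\beta}/L_f$ (no factor $2$), so your choice $t=(\beta-1)\emin\epsilon_{\Theta,\beta}/(2L_f)$ is slightly conservative but matches the constant $4$ in the stated bound exactly.
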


\begin{proof}
Suppose that
\eq
    \frac{ e(z_{i}) p_{\Theta,\beta}^{\ast}(z_{i})}{1 - p_{\Theta,\beta}^{\ast}(z_{i})} \Le
        f\left( \int a(y) c(z_{i},y;r) \mathbb{I}(y \in \Theta) p_{\Theta,\beta}^{\ast}(y) \sigma_{n\setminus i} (dy) \right)  \label{lem:lb1:eq0}
\en
for all $ z_{i} \in \Theta $.  Then $ E_{n,\Theta,1} $ maps the set $ \{p: p_{\Theta,\beta}^{\ast}(z_{i}) \leq p_{i} \leq 1\} $ into itself as the map is monotone. Applying the Brouwer fixed point theorem, we see  $ p_{\Theta,\beta}^{\ast}(z_{i}) \leq p_{n,\Theta,1,i} $ for all $ z_{i} \in \Theta $. Since $  p^{\ast}_{n,\Theta,1} \leq p^{\ast}_{n} $, it remains to verify inequality (\ref{lem:lb1:eq0}) holds.

Now
\begin{align*}
  \lefteqn{ f\left( \int a(y) c(z_{i},y;r) \mathbb{I}(y \in \Theta) p_{\Theta,\beta}^{\ast}(y) \sigma_{n\setminus i} (dy) \right)}  \\
  & =\  f\biggl(\int a(y) c(z_{i},y;r) \mathbb{I}(y \in \Theta) p_{\Theta,\beta}^{\ast}(y) \sigma(y)\,dy \\
   &\qquad\qquad\mbox{} 
        +  \int a(y) c(z_{i},y;r)  \mathbb{I}(y \in \Theta) p_{\Theta,\beta}^{\ast} (y) [\sigma_{n\setminus i}(dy) - \sigma(y)\,dy]
                                   \biggr) \\
  & \geq\  f\biggl(\int a(y) c(z_{i},y;r) \mathbb{I}(y \in \Theta) p_{\Theta,\beta}^{\ast}(y) \sigma(y)\,dy \biggr) \\
   &\qquad\qquad\mbox{} 
     - L_{f} \left|\int a(y) c(z_{i},y;r)  \mathbb{I}(y \in \Theta) 
                          p_{\Theta,\beta}^{\ast}(y) [\sigma_{n\setminus i}(dy) - \sigma(y)\,dy]  \right| \\
  & \geq\ \frac{\beta e(z_{i}) p_{\Theta,\beta}^{\ast}(z_{i})}{1-p_{\Theta,\beta}^{\ast}(z_{i})} 
     -  L_{f} \left|\int a(y) c(z_{i},y;r)  \mathbb{I}(y \in \Theta) 
                p_{\Theta,\beta}^{\ast}(y)[\sigma_{n\setminus i}(dy) - \sigma(y)\,dy]  \right| .
\end{align*}
Therefore,
\begin{align*}
    \lefteqn{f\left( \int a(y) c(z_{i},y;r) \mathbb{I}(y \in \Theta) p_{\Theta,\beta}^{\ast}(y) \sigma_{n\setminus i} (dy) \right) - \frac{ e(z_{i}) p_{\Theta}^{\ast}(z_{i})}{1 - p_{\Theta,\beta}^{\ast}(z_{i})}} \\
   &  \geq\ \frac{(\beta-1) e(z_{i}) p_{\Theta,\beta}^{\ast}(z_{i})}{1-p_{\Theta,\beta}^{\ast}(z_{i})} 
      -  L_{f} \left|\int a(y) c(z_{i},y;r) \mathbb{I}(y \in \Theta) 
                      p_{\Theta,\beta}^{\ast}(y)[\sigma_{n\setminus i}(dy) - \sigma(y)\,dy]  \right| .
\end{align*}
As  $ p_{\Theta}^{\ast} (z) \geq \epsilon_{\Theta,\beta} $ for all $ z \in \Theta $, inequality (\ref{LB:eq1}) will hold  if 
\begin{equation}
    \frac{(\beta-1)\emin\epsilon_{\Theta,\beta}}{L_{f}} -  
     \left|\int a(y) c(z_{i},y;r)  \mathbb{I}(y \in \Theta) 
        p_{\Theta,\beta}^{\ast} (y) [\sigma_{n\setminus i}(dy) - \sigma(y)\,dy]  \right|\ \geq\ 0, \label{lem:lb1:eq1}
\end{equation}
for all $ z_{i} \in \Theta $. Applying Lemma \ref{lem:emp} yields the bound
\begin{align*}
  &\mathbb{P} \left( \max_{i: z_{i} \in \Theta}  \left|\int a(y) c(z_{i},y;r)  \mathbb{I}(y \in \Theta) 
      p_{\Theta,\beta}^{\ast} (y) [\sigma_{n\backslash i}(dy) - \sigma(y)\,dy]   \right|  
                   > \frac{\emin \epsilon_{\Theta,\beta} (\beta-1)}{L_{f}}  \right) \\
  & \leq 2n \exp\left( -C_2 \frac{(n-1)r^d}{A}
             \frac{ \emin^2 \epsilon_{\Theta,\beta}^{2} (\beta-1)^{2}}{\amax^2L_{f}^{2}} \right), 
\end{align*}
if inequality~(\ref{LB:ineq3}) holds.
\end{proof}

Lemma \ref{lem:lb1} shows that inequality (\ref{LB:eq1}) holds with high probability if $ p^{\ast}_{\Theta,\beta} $ 
can be bounded away from zero. We now establish a lower bound on $ p^{\ast}_{\Theta,\beta} $. 

To state the lemma that we need, some further notation is necessary. With $ \Theta := \Theta_{x,t}$ as before, 
suppose that $\eth := \min_{z\in\Theta}q_{1}(z) > 0$. Recall~$C_1$, as introduced following Assumption~E, 
and set
\eqa
    \cth &:=& \min_{z\in\Theta} \int^{1}_{0}  c_{z}(\lambda) \lambda^{d}\, d\lambda; \label{cmin}\\
 C_3 &:=& v_d\cmax (\amax L_{\sigma} + \smax L_{a}).  \label{C5}
\ena

\begin{lemma} \label{lem:LB}
Suppose that Assumptions B--E hold. Define 
\[
     q_{\Theta,\beta^{\prime},m}(z) := \left( m(t-\|z-x\|) \wedge q_{\beta^{\prime}}(z) \right).
\]
If there exists  constants $ \beta^{\prime} \in (\beta, (1-\eth)^{-1}),\ \theta_{1} \in (1,\infty) ,\ \theta_{2} \in (0,1) $ 
and $ m \in (0,\infty)  $ such that 
\begin{align}
   &(1+ \theta_{1})mr  \Le (\b'\eth + 1 - \b'); \label{ineq:L0} \\
   &L_{f} \rmax (m\vee L_q) r  \Le (\beta^{\prime} - \beta) \emin \theta_{1} mr;  \label{ineq:L1} \\
   &\frac{L_{f} (C_3 + \rmax/t) r}{\theta_{2}} + \rmax (C_1 \rmax +L_f) \theta_{1} mr 
             \Le    \emin (\b \eth + 1 - \b); \label{ineq:L2}\\
   &\frac{r}{t}   \Le \min\Bigl\{ \theta_{2},\frac1{2(2+\theta_{1})} \Bigr\};  \label{ineq:theta2}  \\
   &(C_3  + \rmax/t)r \Le \amin v_{d-1} \smin \left( \cth -2\cmax \theta_{2}\right)
               (1-\theta^{2}_{2})^{(d-1)/2}, \label{ineq:theta2c}
\end{align}
then $ q_{\Theta,\beta^{\prime},m}(z) \leq p_{\Theta,\beta}^{\ast}(z) $ for all $ z \in \Theta $. 
\end{lemma}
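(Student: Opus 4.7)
The plan is to show that $q_{\Theta,\b',m}$ is a pointwise sub-solution for $E_{\Theta,\b}$, i.e., $E_{\Theta,\b}(q_{\Theta,\b',m})(z) \ge q_{\Theta,\b',m}(z)$ for every $z\in\Theta$. Since $f$ is increasing, $E_{\Theta,\b}$ is monotone, so iterating from $q_{\Theta,\b',m}$ produces a non-decreasing sequence bounded above by $1$ whose limit is a fixed point of $E_{\Theta,\b}$; this limit is bounded above by the largest fixed point $p^{\ast}_{\Theta,\b}$, giving the claim. Unwinding the definition, the sub-solution inequality is equivalent to
\[
   f(I(z)) \ \ge\ \frac{\b e(z)\, q_{\Theta,\b',m}(z)}{1-q_{\Theta,\b',m}(z)},\qquad
   I(z) \Def \int_\Theta a(y) c(z,y;r) q_{\Theta,\b',m}(y)\sigma(y)\,dy,
\]
and I would verify it by splitting on which term realises the minimum $q_{\Theta,\b',m}(z) = md(z)\wedge q_{\b'}(z)$, where $d(z):=t-\|z-x\|$.

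In the \emph{interior regime} $q_{\b'}(z)\le md(z)$, one has $q_{\Theta,\b',m}(z) = q_{\b'}(z)$, and the fixed-point identity $f(\rho(z)q_{\b'}(z)) = \b' e(z) q_{\b'}(z)/(1-q_{\b'}(z))$ reduces the target to $f(I(z)) \ge (\b/\b')f(\rho(z)q_{\b'}(z))$; concavity of $f$ with $f(0)=0$ then makes it enough to show $I(z) \ge (\b/\b')\rho(z)q_{\b'}(z)$. Note that the interior condition together with Lemma~\ref{lem:Fbound} and inequality~(\ref{ineq:L0}) forces $d(z) \ge q_{\b'}(z)/m \ge (1+\theta_1)r$, so that $B(z,r)\subseteq\Theta$ and the boundary cut-off in $I(z)$ contributes nothing. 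The deficit $\rho(z)q_{\b'}(z)-I(z)$ then decomposes into the Lipschitz oscillation of $a$, $\sigma$ and $q_{\b'}$ across $B(z,r)$, controlled by Lemma~\ref{lem:Lips} and the constant $C_3$, and the cone-vs-$q_{\b'}$ swap on $\{y\in B(z,r):\, md(y) < q_{\b'}(y)\}$, whose thickness transverse to $\partial\Theta$ is $O(r)$. Inequality~(\ref{ineq:L1}) is designed precisely to absorb these errors into the slack $(\b'-\b)\rho(z)q_{\b'}(z)/\b'$.

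In the \emph{boundary regime} $md(z) < q_{\b'}(z)$, one has $q_{\Theta,\b',m}(z) = md(z)$, and I invoke the two-sided bound $f(x) \ge L_f x - C_1 x^2$ from Assumption~E, reducing the task to $L_f I(z) - C_1 I(z)^2 \ge \b e(z) md(z)/(1-md(z))$. Using the $1$-Lipschitz property of $d$, one has $q_{\Theta,\b',m}(y) \ge m(d(z)-\|y-z\|)^{+}\wedge(\b'\eth+1-\b')$, and a spherical-coordinate calculation over shells in $B(z,r)\cap\Theta$ yields a lower bound of the form $I(z) \ge \rho(z) md(z) - (\text{geometric error})\cdot md(z) - (\text{Lipschitz error})$. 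The geometric error is governed by the kernel-cap quantity $\cth - 2\cmax\theta_2$ appearing in~(\ref{ineq:theta2c}), which measures how much of each $z$-centred shell of radius $\lambda r$ still lies inside $\Theta = B_x(t)$ when $z$ is close to $\partial\Theta$, together with the $(1-\theta_2^2)^{(d-1)/2}$ solid-angle factor. The quadratic correction $C_1 I(z)^2$ is bounded crudely by $C_1\rmax I(z)$, which is the origin of the combination $C_1\rmax + L_f$ in the choice of $m$ in Theorem~\ref{thm:LB}; inequalities~(\ref{ineq:L2}) and~(\ref{ineq:theta2}) then absorb all remaining errors.

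The main obstacle is precisely this boundary estimate: one must produce a lower bound on $I(z)$ whose leading order decays linearly in $d(z)$ and whose coefficient is uniformly larger than $\b e(z)/L_f$, while the right-hand side also decays linearly in $d(z)$. Tuning the cone slope $m$ large enough for the linear coefficients to balance at the boundary, yet small enough for the interior errors to remain absorbable into $(\b'-\b)$, is what fixes the form of $m$ stated in Theorem~\ref{thm:LB} and forces the delicate geometric condition~(\ref{ineq:theta2c}) on the colonisation kernel; the interior-case errors, by contrast, are essentially routine Lipschitz bookkeeping.
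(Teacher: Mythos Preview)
Your overall strategy is right---show that $q_{\Theta,\beta',m}$ is a sub-solution of $E_{\Theta,\beta}$ and conclude by monotonicity (the paper uses the Schauder fixed point theorem instead, but either works)---and your reduction to the pointwise inequality $f(I(z)) \ge \beta e(z) q_{\Theta,\beta',m}(z)/(1-q_{\Theta,\beta',m}(z))$ is exactly what is verified. However, the decomposition is not the one the paper uses, and your boundary-regime argument has a real gap.

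The paper does \emph{not} split according to which term realises the minimum; it splits into three annuli by distance $d(z)=t-\|z-x\|$: the inner region $\Theta_1=\{d(z)\ge\theta_1 r\}$, the middle annulus $\Theta_2=\{\theta_2 r\le d(z)<\theta_1 r\}$, and the rim $\Theta_3=\{d(z)<\theta_2 r\}$. On $\Theta_1$ the paper uses only that $q_{\Theta,\beta',m}$ is $(m\vee L_q)$-Lipschitz together with Lemma~\ref{lem:Fbound2}; this covers both cases of the minimum at once and is where (\ref{ineq:L1}) enters. The cap estimate involving $\cth-2\cmax\theta_2$ and the factor $(1-\theta_2^2)^{(d-1)/2}$ that you describe is used only on the thin rim~$\Theta_3$.

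The gap is in the middle annulus~$\Theta_2$. There the paper does \emph{not} use your cone lower bound $q_{\Theta,\beta',m}(y)\ge m(d(z)-\|y-z\|)^+$; that bound discards a first-order term of size $O(mr)$ (since $\int a(y)c(z,y;r)\|y-z\|\sigma(y)\,dy$ is of order~$r$, not~$r^2$), and after dividing by $q_{\Theta,\beta',m}(z)\asymp md(z)$ one cannot close the inequality uniformly over $\theta_2 r\le d(z)<\theta_1 r$. Instead, the paper uses the exact identity $q_{\Theta,\beta',m}(y)=m(t-\|y-x\|)$ near such~$z$, applies the cosine rule to obtain
\[
   t-\|y-x\| \Eq (t-\|z-x\|) + \|z-y\|\cos\gamma(x,y,z) + O(r^2/t),
\]
and then exploits the radial symmetry of $c(z,\cdot;r)$ to make $\int c(z,y;r)\|z-y\|\cos\gamma(x,y,z)\,dy=0$. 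This cancellation is what produces an $O(mr^2)$ error rather than $O(mr)$, and is precisely why (\ref{ineq:L2}) and (\ref{ineq:theta2}) have the form they do. Your sketch omits this symmetry step, so as written the boundary-regime argument would not go through on~$\Theta_2$.
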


\begin{proof}
Suppose that 
\begin{equation}
   \frac{\beta e(z) q_{\Theta,\beta^{\prime},m}(z)}{1-q_{\Theta,\beta^{\prime},m}(z)} 
       \Le f\left( \int a(y) c(z,y;r) q_{\Theta,\beta^{\prime},m}(y)  \mathbb{I}(y \in \Theta) \s(y)\,dy \right) 
                   \label{lem:lb2:eq1}
\end{equation}
for all $ z \in \Theta $. Then $ E_{\Theta,\beta} $ maps $ \{p \in C^{+}(\Theta): q_{\Theta,\beta^{\prime},m} \leq p\} $ 
into itself. The map $ E_{\Theta,\beta} $ is compact by Assumption C. By the Schauder fixed point theorem, 
$ q_{\Theta,\beta',m} \leq p^{\ast}_{\Theta,\beta} $. We now verify that inequality (\ref{lem:lb2:eq1}) holds.

{\bf L1}.For any $ z$ such that $\|z -x\| \leq t-r$,
\[
     \int a(y) c(z,y;r) q_{\Theta,\beta^{\prime},m}(y) \mathbb{I}(y\in\Theta) \s(y)\,dy
           \Eq \int a(y) c(z,y;r) q_{\Theta,\beta^{\prime},m}(y) \s(y)\,dy. 
\]
From Lemma \ref{lem:Lips}, $ q_{\Theta,\beta^{\prime},m} $ is Lipschitz continuous with constant $ (m \vee L_q) $. Hence,
\begin{align}
    \lefteqn{f\left(  \int a(y) c(z,y;r) q_{\Theta,\beta^{\prime},m}(y)  \mathbb{I}(y \in \Theta) \s(y)\,dy \right)} \nonumber \\
     &\Eq  f\left( q_{\Theta,\beta^{\prime},m}(z) \int a(y) c(z,y;r)  \s(y)\,dy 
         + \int a(y) c(z,y;r) [q_{\Theta,\beta^{\prime},m}(y) - q_{\Theta,\beta^{\prime},m}(z)] \s(y)\,dy \right) \nonumber \\
     &\ \geq\ f\left(\rho(z) q_{\Theta,\beta^{\prime},m}(z)\right) - L_{f} \rho(z) (m\vee L_q) r. \label{lem:lb:eq1b}
\end{align}
As $ q_{\Theta,\beta^{\prime},m}(z) \leq q_{\beta^{\prime}}(z) $, we can apply Lemma \ref{lem:Fbound2} with 
inequality~(\ref{lem:lb:eq1b}) to show
\begin{align}
   \lefteqn{f\left(\int a(y) c(z,y;r) q_{\Theta,\beta^{\prime},m}(y) \mathbb{I}(y\in \Theta) \s(y)\,dy\right) 
                - \frac{\beta e(z) q_{\Theta,\beta^{\prime},m}(z)}{1-q_{\Theta,\beta^{\prime},m}(z)}}  \nonumber\\
 &\qquad \geq \frac{(\beta^{\prime} - \beta) e(z) q_{\Theta,\beta^{\prime},m}(z)}{1-q_{\Theta,\beta^{\prime},m}(z)} 
          - L_{f}\rho(z) (m\vee L_q) r.    \phantom{XXXXXXX}  \label{lem:lb2:eq2}
\end{align} 
From the definition of~$\eth$, $q_{\b'}(z) \ge (\b'\eth + 1 - \b')$ for all~$z\in\Theta$, by Lemma~\ref{lem:Fbound}. 
Set $ \Theta_{1} := \{y: \|y-x\| \leq t - \theta_{1} r \} $. Then
$
      q_{\Theta,\beta^{\prime},m}(z)\ \geq\ \theta_{1} mr 
$ 
for all $ z \in \Theta_{1} $ by inequality (\ref{ineq:L0}). Applying this lower bound to inequality~(\ref{lem:lb2:eq2}), 
we see that inequality~(\ref{lem:lb2:eq1}) holds for all $ z \in \Theta_{1} $ if inequality~(\ref{ineq:L1}) holds.

{\bf L2}. Define $ \Theta_{2} := \{y:  t  - \theta_{1}r  <\|y-x\| \leq t  - \theta_{2} r \} $. 
For any $ z \in \Theta_{2} $  and $ y $ such that $ \|y-z\| \leq r $, 
\[
     m(t-\|y-x\|) \Le m(t - \|z-x\| + \|z-y\|) \Le (1+ \theta_{1}) mr \Le q_{\beta^{\prime}}(y)
\]
by Lemma~\ref{lem:Fbound} and since $ (1+\theta_{1}) mr \leq \beta^{\prime}\eta_{\Theta} + 1 - \beta^{\prime}  $ by  
inequality~(\ref{ineq:L0}). Therefore, for any $ z \in \Theta_{2} $  and $ y \in \Theta $ such that 
$ \|y-z\| \leq r $, $ q_{\Theta,\beta^{\prime},m}(y) = m(t-\|y-x\|)$. 
For any $ z \in \Theta_{2} $  and $ y \not\in \Theta $ such that $ \|y-z\| \leq r $, we have $ m(t-\|y-x\|) \leq 0 $. 
Hence
\begin{align}
   \lefteqn{\int a(y) c(z,y;r) q_{\Theta,\beta^{\prime},m}(y)  \mathbb{I}(y\in\Theta) \s(y)\,dy} \nonumber \\
   &\quad\ \geq\ \int a(y) c(z,y;r) m(t- \|y-x\|) \s(y)\,dy \nonumber \\
   &\quad\Eq  m(t-\|z-x\|) \rho(z) + m \int a(y) c(z,y;r) \left[\|z-x\| - \|y - x\|\right]\s(y)\,dy. \label{lem:L3:eq1}
\end{align}
Let $ \gamma(x,y,z) $ be the angle formed between the vectors $ x-z $ and $ y-z $. By the cosine rule
\begin{align*}
   \lefteqn{\|z-x\| - \|y-x\| - \|z-y\| \cos \gamma(x,y,z)} \\
    & \Eq \|z-x\| \left(1 - \left(1+ \left(\frac{\|z-y\|}{\|z-x\|}\right)^{2} 
             - 2 \left(\frac{\|z-y\|}{\|z-x\|}\right) \cos\gamma(x,y,z)\right)^{1/2} \right)  - \|z-y\|\cos\gamma(x,y,z) .
\end{align*}
Let $ h(u) = (1 + u^{2} -2u\cos \gamma)^{1/2} $. Taking a Taylor expansion about $ 0 $ gives 
$ h(u) = 1-u\cos\gamma + \frac{1}{2} u^{2} h^{\prime\prime}(\tilde{u}) $ for some $ \tilde{u} \in (0,u)$. Therefore,
\begin{align*}
\lefteqn{\|z-x\| - \|y-x\| - \|z-y\| \cos \gamma(x,y,z)} \\
    &\ \geq\ -\frac{\|z-y\|^{2}}{2\|z-x\|} 
          \sup_{\xi \in (0, \frac{\|z-y\|}{\|z-x\|})} \left(1 + \xi^{2} - 2\xi \cos\gamma(x,y,z)\right)^{-1/2} \\
    &\ \geq\ -\frac{r^{2}}{2\|z-x\|} \left(1 - \frac{2r}{\|z-x\|}\right)^{-1}.
\end{align*} 
Noting that $ \|z-x\| \geq t - \theta_{1} r $ and substituting this bound into~(\ref{lem:L3:eq1}) gives
\begin{align*}
     \lefteqn{\int a(y) c(z,y;r) q_{\Theta,\beta^{\prime},m}(y) \mathbb{I}(y\in\Theta) \s(y)\,dy} \\
    &\quad \geq\  q_{\Theta,\beta^{\prime},m}(z) \rho(z) + m \int a(y) c(z,y;r) \|z-y\| \cos\gamma(x,y,z) \s(y)\,dy  
          - \frac{mr^{2}\rho(z)}{2(t-(2+\theta_{1})r)},
\end{align*}
if $ t > (2+\theta_{1})r$; but this follows from inequality~(\ref{ineq:theta2}), 
which gives $t-(2+\theta_{1})r \geq t/2 $. Now, from Assumption~C,
\begin{align}
   \lefteqn{\int a(y) c(z,y;r) q_{\Theta,\beta^{\prime},m}(y) \mathbb{I}(y\in\Theta) \s(y)\,dy} \nonumber \\
    &\quad \geq\  q_{\Theta,\beta^{\prime},m}(z) \rho(z) + m a(z) \sigma(z) \int  c(z,y;r) \|z-y\| \cos\gamma(x,y,z) dy 
                        \nonumber \\
   & \ \qquad + m a(z) \int c(z,y;r) \|z-y\| \cos\gamma(x,y,z) [\sigma(y) - \sigma(z)] dy  \nonumber \\
   & \  \qquad + m \int [a(y) -a(z)] c(z,y;r) \|z-y\| \cos\gamma(x,y,z) \sigma(y) dy   -\frac{mr^{2}\rho(z)}{t}  \nonumber\\
   &\quad \geq\ q_{\Theta,\beta^{\prime},m}(z) \rho(z) + m a(z) \sigma(z) \int  c(z,y;r) \|z-y\| \cos\gamma(x,y,z) dy 
                 - (C_3 + \rmax/t) mr^{2}. \label{lem:L2:eq0}
\end{align}
By the radial symmetry of $ c(z,y;r) $, $ \int  c(z,y;r) \|z-y\| \cos\gamma(x,y,z) dy = 0 $. Thus we deduce that
\begin{align*}
     f\left( \int a(y) c(z,y;r) q_{\Theta,\beta^{\prime},m}(y) \s(y)\,dy \right)  
       \ \geq\  f\left(q_{\Theta,\beta^{\prime},m}(z) \rho(z) \right) - L_{f} (C_3 + \rmax/t)  mr^{2} .
\end{align*}
Therefore, applying Lemma \ref{lem:Fbound2} and noting that $ f(x) \geq L_f x - C_1 x^{2}  $ gives
\begin{align}
    \lefteqn{f\left( \int a(y) c(z,y;r) q_{\Theta,\beta^{\prime},m}(y) \s(y)\,dy \right)  
          - \frac{\beta e(z) q_{\Theta,\beta^{\prime},m}(z)}{1-q_{\Theta,\beta^{\prime},m}(z)}} \nonumber \\
   &\ \geq\   f\left(q_{\Theta,\beta^{\prime},m}(z) \rho(z) \right) - \frac{\beta e(z) q_{\Theta,\beta^{\prime},m}(z)}
             {1-q_{\Theta,\beta^{\prime},m}(z)}  - L_{f}(C_3 + \rmax/t) mr^{2}  \nonumber \\
   &\ \geq\ \frac{q_{\Theta,\beta^{\prime},m}(z)}{1-q_{\Theta,\beta^{\prime},m}(z)}  
             \left( L_f \rho(z) (1-q_{\Theta,\beta^{\prime},m}(z))  
       - \beta e(z)  -\frac{L_{f}(C_3 + \rmax/t) mr^{2}}{ q_{\Theta,\beta^{\prime},m}(z)} 
                        - C_1 \rho^{2}(z) q_{\Theta,\beta^{\prime},m}(z) \right) \nonumber \\
  &\ \geq\ \frac{q_{\Theta,\beta^{\prime},m}(z)}{1-q_{\Theta,\beta^{\prime},m}(z)}  \left( L_f \rho(z)  
      - \beta e(z) - \frac{L_{f}(C_3 + \rmax/t)  mr^{2}}{ q_{\Theta,\beta^{\prime},m}(z)} 
            - \rho(z)  q_{\Theta,\beta^{\prime},m}(z)  ( C_1\rho(z) + L_f)\right) 
             \label{lem:L2:eq3}
\end{align}

We now need a lower bound on $ L_f \rho(z) - \beta e(z) $. 
% By definition, $ q_{\beta}(z) $ satisfies 
% $ (1-q_{\beta}(z)) f^{\prime}(\rho(z)q_{\beta}(z)) = \beta e(z) q_{\beta}(z) $. 
By Assumption~E, we have
$$ 
     (1 - q_{\beta}(z)) L_f \rho(z) q_{\beta}(z)\ \geq\ \beta e(z) q_{\beta}(z) .  
$$
Hence, because $L_f \rho(z) > e(z)$ whenever $q_1(z) > 0$, we deduce that
$$ 
     L_f \rho(z) - \beta e(z)\ \geq\ L_f \rho(z) q_{\beta}(z)\ \geq\ e(z) q_{\beta}(z) .
$$

This, together with the lower bound on $ q_{\beta}(z) $ from Lemma \ref{lem:Fbound}, gives 
$ L_f \rho(z) - \b e(z) \geq \emin (\b \eth +1 - \b) $.
As $ \theta_{2} mr \leq q_{\Theta,\beta^{\prime},m}(z) \leq \theta_{1} mr  $ for all $ z \in \Theta_{2} $ 
we see that the right hand side of inequality (\ref{lem:L2:eq3}) is positive if inequality~(\ref{ineq:L2}) holds. 
Hence, inequality~(\ref{lem:lb2:eq1}) holds for all $ z \in \Theta_{2} $.

{\bf L3}. Define $ \Theta_{3} := \{y:  t - \theta_{2}r <\|y-x\| \leq t\} $. As in {\bf L2}, for any $ z \in \Theta_{3} $  
and $ y $ such that  $ \|y-z\| \leq r $, we have $ q_{\Theta,\beta^{\prime},m}(y) = m(t-\|y-x\|)$.  Following  
inequality~(\ref{lem:L2:eq0}) in {\bf L2}, 
\begin{align*}
  \lefteqn{\int a(y) c(z,y;r) q_{\Theta,\beta^{\prime},m}(y) \mathbb{I}(y\in\Theta) \s(y)\,dy - m(t-\|z-x\|)\rho(z)} \\
    &\ \geq\   m a(z) \sigma(z) \int c(z,y;r) \|z-y\| \cos\gamma(x,y,z) \mathbb{I}(y \in \Theta) dy  
                  - (C_3 + \rmax/t)  mr^{2} .
\end{align*}
As $ \theta_{2} < 1 $, let~$w$  be a point of intersection of the ball $B_z(r)$ with~$\partial\Theta$, 
and let $\phi := \gamma(x,w,z)$.
Applying the change of variable $\lambda(y) =  r^{-1} \| z - y\| $ and $ \omega(y) = \gamma(x,y,z) $ yields
\[
       \int_{\{ y : -\phi \leq \omega(y) \leq \phi \}} c(z,y;r) \|z-y\| \cos\omega(y) \mathbb{I}(y \in \Theta) dy 
         \ \geq\ \kd r\left(\int^{1}_{0}  c_{z}(\lambda) \lambda^{d}\, d\lambda \right) (\sin\phi)^{d-1}.
\]
 It remains to determine a lower bound for the integral
\eq\label{negative-contn}
     \int_{\{ y : \gamma(x,y,z) \in [-\pi,-\phi) \cup (\phi,\pi]\}} c(z,y;r) \|z-y\| \cos\gamma(x,y,z) 
               \mathbb{I}(y \in \Theta)\, dy .
\en

The region of integration is included in a cylinder of height
$(t-\|z-x\|) + r\max\{\cos\phi,0\}$ and  radius $r\sin\phi$. 
As $ \phi $ is determined by the intersection of two circles,
\begin{equation*}
   \cos\phi  \Eq r^{-1} \left( \|z-x\|- \frac{\|z-x\|^{2} -r^{2} + t^{2}}{2\|z-x\|} \right)  
     \Eq \frac{\|z-x\|-t}{r} + \frac{r[1 - ((t-\|z-x\|)/r)^{2}]}{2 \|z-x\|}. \label{lem:L3:eq6}
\end{equation*}
The function $ x + r(1-x^2)/(2(t+rx)) $ is increasing when $ t > r $, and  so 
\begin{equation}
 -\theta_{2} \Le  \cos\phi \Le \frac{r}{2t}.  \label{ineq:last}
 \end{equation}
Hence, for $z \in \Theta_3$, the volume of integration cannot exceed
$$  
     v_{d-1}(r\sin\phi)^{d-1}(\th_2 r + r^2/t) \Le 2 v_{d-1} (r\sin \phi)^{d-1} \th_{2} r ,
$$ 
by inequality (\ref{ineq:theta2}). The largest negative value of the integrand is bounded below by $-\cmax r^{-d+1}$.  
Hence this integral is bounded below by $  -2\cmax v_{d-1}r (\sin\phi)^{d-1}\th_2 $. This leads to the lower bound
\begin{align}
  \lefteqn{\int a(y) c(z,y;r) q_{\Theta,\beta^{\prime},m}(y) \mathbb{I}(y\in\Theta) \s(y)\,dy - m(t-\|z-x\|)\rho(z)} \nonumber \\
    &\quad \geq\   m r a(z) \sigma(z) v_{d-1} \left( \cth -  2\cmax \th_2 \right)(\sin\phi)^{d-1}   
                   -  (C_3 + \rmax/t)  mr^2.      \label{lem:L3:eq7} 
\end{align}
From inequalities (\ref{ineq:theta2}) and~(\ref{ineq:last}), $ (\sin \phi)^{d-1} \geq (1-\theta^{2}_{2})^{(d-1)/2} $. 
Applying inequality~(\ref{ineq:theta2c}), we see that the right-hand side of~(\ref{lem:L3:eq7}) is positive. Therefore, 
$$
   f(\rho(z)q_{\Theta,\beta^{\prime},m}(z)) 
     \Le f\left(\int a(y) c(z,y;r) q_{\Theta,\beta^{\prime},m}(y) \mathbb{I}(y\in\Theta) \s(y)\,dy\right). 
$$ 
Lemma~\ref{lem:Fbound2}, with $\t = \rho(z)$ and $\nu = \beta'e(z)$, then implies that inequality (\ref{lem:lb2:eq1}) 
holds for all $ z \in \Theta_{3} $. 
Hence,  $ q_{\Theta,\beta^{\prime},m}(z) \leq p_{\Theta,\beta}^{\ast}(z) $ for all $ z \in \Theta $.

\end{proof}

\begin{lemma} \label{lem:ULB}
Suppose that Assumptions B--E hold, that $ \min_{z\in\Theta}q_{1}(z) =: \eta_{\Theta} > 0$ and 
that $ \beta \in (1,1+ \eth/2) $.  Assume that 
\begin{align}
   L_{q} r &\Le \frac{\eth^{2} \emin^{2}}{32 L_{f} \rmax^{2}(C_1 \rmax + L_f)};  \label{lem:ULB:ineq1} \\
   \frac{r}{t} &\Le \left( \frac{ \cth}{4 \cmax }  \wedge \frac{1}{\sqrt{2}} 
                \wedge \frac{\eth \emin}{8L_{f} \rmax + 4 \eth \emin}\right);  \label{lem:ULB:ineq2} \\
   (C_3 + \rmax/t)r &\Le  \left\{ \amin \smin \cth v_{d-1} 2^{-(d+3)/2}\right\} 
          \wedge \left\{ \frac{\eth \emin}{4L_{f}}  
      \left( \frac{\cth}{4 \cmax} \wedge \frac{1}{\sqrt{2}} \right) \right\}, \label{lem:ULB:ineq3} 
\end{align}
and define
\eq\label{C7-def}
    C_4 \Def \left(1 \wedge \frac{f(\amin \smin)}{2^{(d+1)/2} \emax}\right)  \left( \frac{ \cth}{4 \cmax} 
        \wedge \frac{1}{\sqrt{2}} \right)  \frac{\emin^{2}}{32 L_{f} \rmax^{2}(C_1 \rmax + L_f)}  .
\en
Then $  p_{\Theta,\beta}^{\ast}(z) \geq C_4  \eth^{2} $ for all $ z \in \Theta$. 
\end{lemma}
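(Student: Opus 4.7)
The plan is to apply Lemma \ref{lem:LB} with explicit choices of $\beta^{\prime}$, $\theta_{1}$, $\theta_{2}$ and~$m$, and then to upgrade the resulting pointwise bound $p_{\Theta,\beta}^{\ast}(z) \geq q_{\Theta,\beta^{\prime},m}(z)$ near the boundary of~$\Theta$ by feeding it back into the defining fixed-point identity for $p_{\Theta,\beta}^{\ast}$. I take $\beta^{\prime} := \beta + \eth/4$; since $\beta < 1+\eth/2$, this gives $\beta^{\prime} < 1 + 3\eth/4 < (1-\eth)^{-1}$, the last inequality following from $(1+3\eth/4)(1-\eth) < 1$. Set $\theta_{2} := \cth/(4\cmax) \wedge 1/\sqrt{2}$, so that $\cth - 2\cmax\theta_{2} \geq \cth/2$ and $(1-\theta_{2}^{2})^{(d-1)/2} \geq 2^{-(d-1)/2}$, let~$m$ be as defined in Theorem~\ref{thm:LB}, and let~$\theta_{1}$ be the smallest value compatible with~(\ref{ineq:L1}). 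A routine algebraic verification then shows that hypotheses (\ref{lem:ULB:ineq1})--(\ref{lem:ULB:ineq3}) imply all five inequalities (\ref{ineq:L0})--(\ref{ineq:theta2c}) required by Lemma~\ref{lem:LB}; the constants in (\ref{lem:ULB:ineq1})--(\ref{lem:ULB:ineq3}) are tuned precisely so that each implication goes through cleanly.

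Having applied Lemma~\ref{lem:LB}, we have $p_{\Theta,\beta}^{\ast}(z) \geq q_{\Theta,\beta^{\prime},m}(z) = m(t-\|z-x\|) \wedge q_{\beta^{\prime}}(z)$ for all $z \in \Theta$. For~$z$ in the interior region $\Theta_{1} \cup \Theta_{2} = \{z \in \Theta: \|z-x\| \leq t - \theta_{2} r\}$, this yields $p_{\Theta,\beta}^{\ast}(z) \geq \theta_{2} mr$, and direct comparison with the formula~(\ref{C7-def}) for~$C_{4}$ (using $\beta^{\prime} - \beta = \eth/4$ in the definition of~$m$) shows that $\theta_{2} mr \geq C_{4}\eth^{2}$, since the prefactor $1 \wedge f(\amin\smin)/(2^{(d+1)/2}\emax)$ in~$C_{4}$ is at most~$1$. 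So the conclusion holds comfortably on the interior.

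The main work is on the rim $\Theta_{3} := \{z \in \Theta: t - \theta_{2} r < \|z-x\| \leq t\}$, where $q_{\Theta,\beta^{\prime},m}(z)$ may be close to zero. For such~$z$, I invoke the identity $p_{\Theta,\beta}^{\ast}(z) = f(I(z))/(\beta e(z) + f(I(z)))$ with $I(z) := \int_{\Theta} a(y) c(z,y;r) p_{\Theta,\beta}^{\ast}(y) \sigma(y)\,dy$. Since $p_{\Theta,\beta}^{\ast}(y) \geq \theta_{2} mr$ on $\Theta_{1} \cup \Theta_{2}$, one obtains $I(z) \geq \amin\smin \theta_{2} mr\, V_{z}$, where $V_{z} := \int_{B_{z}(r) \cap (\Theta_{1} \cup \Theta_{2})} c(z,y;r)\,dy$. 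A geometric estimate in spherical coordinates (writing $y - z = r\lambda\omega$ and $u := (x-z)/\|x-z\|$, so that the constraint $\|y-x\| \leq t - \theta_{2} r$ becomes $\lambda\langle u,\omega\rangle \geq \theta_{2}$ to leading order, with $O(r/t)$ curvature corrections absorbed by~(\ref{lem:ULB:ineq2})) yields $V_{z} \geq v_{d-1}\cth(1-\theta_{2}^{2})^{(d-1)/2}$ up to a positive constant depending only on~$d$. Concavity of~$f$ with $f(0)=0$ then gives $f(I(z)) \geq I(z)\,f(\amin\smin)/(\amin\smin)$ whenever $I(z) \leq \amin\smin$, a condition that~(\ref{lem:ULB:ineq3}) is calibrated to enforce. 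The elementary bound $f/(\beta e + f) \geq f/(2\beta\emax)$ when $f \leq \beta\emax$ (and $\geq 1/2$ otherwise), combined with $\beta \leq 2$, finally delivers $p_{\Theta,\beta}^{\ast}(z) \geq C_{4}\eth^{2}$ on~$\Theta_{3}$. The principal obstacle is this geometric lower bound on~$V_{z}$: it must handle the curvature corrections of order $r/t$, and it must be expressed in terms of~$\cth$, which involves $\lambda^{d}$, even though the natural integrand in~$V_{z}$ supplies only $\lambda^{d-1}$; the upper bound on $(C_{3}+\rmax/t)r$ in~(\ref{lem:ULB:ineq3}) is what compensates for this missing power of~$\lambda$.
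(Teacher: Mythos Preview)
Your overall architecture matches the paper's: choose explicit $\beta',\theta_1,\theta_2,m$, verify the hypotheses of Lemma~\ref{lem:LB}, read off the bound $\theta_2 mr$ on $\Theta_1\cup\Theta_2$, and then upgrade on the rim~$\Theta_3$ by one more application of~$E_{\Theta,\beta}$. Two points, however, need attention.

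First, a minor one: your parameter choices do not quite match the constants the lemma is tuned for. Taking $m$ from Theorem~\ref{thm:LB} with $\beta'-\beta=\eth/4$ gives $mr=\eth^2\emin^2/\{16L_f\rmax^2(C_1\rmax+L_f)\}$, exactly twice the value the paper actually uses (namely the right-hand side of~(\ref{lem:ULB:ineq1})). With the doubled~$mr$, the check of~(\ref{ineq:L0}) via $(1+\theta_1)mr\le 2\theta_1 mr\le\eth/4$ need not go through, since only $L_f\rmax/\emin>1$ is available, not $\ge2$. The paper instead sets $\beta'=\tfrac12(1-\eth)^{-1}+\tfrac12\beta$ and takes $mr$ equal to the right-hand side of~(\ref{lem:ULB:ineq1}); with these choices all of (\ref{ineq:L0})--(\ref{ineq:theta2c}) do follow from (\ref{lem:ULB:ineq1})--(\ref{lem:ULB:ineq3}).

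Second, and more seriously, your treatment of~$\Theta_3$ has a genuine gap. You bound $I(z)$ below by $\amin\smin\,\theta_2 mr\,V_z$ with $V_z=\int_{B_z(r)\cap(\Theta_1\cup\Theta_2)}c(z,y;r)\,dy$, and then claim $V_z\ge (\text{const})\,\cth$. But~$\cth=\int_0^1 c_z(\lambda)\lambda^d\,d\lambda$ includes the contribution of~$c_z$ near $\lambda=0$, whereas for $z\in\partial\Theta$ the region $B_z(r)\cap(\Theta_1\cup\Theta_2)$ forces $\lambda\gtrsim\theta_2$. The assumptions give no lower bound on~$c_z$ over $[\theta_2,1)$, so $V_z$ can be made arbitrarily small relative to~$\cth$ (take $c_z$ concentrated near $\lambda=0$, positive but tiny on $[\theta_2,1)$). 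Inequality~(\ref{lem:ULB:ineq3}) does not rescue this: it controls an error term, not the shape of~$c_z$. The paper avoids the problem by feeding the \emph{linear} sub-solution $q_{\Theta,\beta',m}(y)=m(t-\|y-x\|)$, rather than your constant~$\theta_2 mr$ on a sub-region, back through~$E_{\Theta,\beta}$, and then quoting inequality~(\ref{lem:L3:eq7}) already established in the proof of Lemma~\ref{lem:LB}. The extra factor $\|z-y\|=r\lambda$ coming from the linear profile is precisely what produces $\int_0^1 c_z(\lambda)\lambda^d\,d\lambda=\cth$; throwing it away, as you do, loses the bound.
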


\begin{proof}
We begin by showing that the above inequalities are sufficient for the inequalities of Lemma \ref{lem:LB} to hold,
for suitable choices of $\b',\th_1,\th_2$ and~$m$. 
Set $ \beta' = \frac{1}{2(1-\eth)} + \frac{\beta}{2} $. Then 
\begin{equation}
   \beta^{\prime} - \beta  \Eq \frac{1}{2(1-\eth)} - \frac{\beta}{2}  = \frac{\beta\eth + 1 - \beta}{2(1-\eth)}
   \ \geq\ \frac{\eth (1+\eth)}{4(1-\eth)}\ \geq\ \frac{\eth}{4}, \label{proof:ULB:eq1}
\end{equation}
and
\begin{equation}
    \beta^{\prime} \eth + 1 - \beta^{\prime} \Eq \frac{1}{2} (\beta \eth + 1 - \beta)
           \ \geq\ \frac{\eth}{4}. \label{proof:ULB:eq2}
\end{equation}
Set 
\begin{align*}
    \theta_{1} &\Def \frac{4 L_{f} \rmax}{\eth \emin}; \\
    mr &\Def \frac{\eth^{2} \emin^{2}}{32 L_{f} \rmax^{2}(C_1 \rmax + L_f)};  \\
    \theta_{2} &\Def \left( \frac{ \cth}{4 \cmax } \wedge \frac{1}{\sqrt{2}} \right). 
\end{align*}
Since
\[
   \frac{L_{f} \rmax}{\eth \emin}\ \geq\ \frac{ L_f \rho(z)}{\eth e(z)}\ \geq\ \frac{1 + q_{1}(z)}{\eth} 
    \ \geq\ \frac{1 + \eth}{\eth}\ \geq\ 1,
\]
it follows that $ \theta_{1} > 1 $. This, together with inequality~(\ref{proof:ULB:eq2}), implies that 
inequality~(\ref{ineq:L0}) is satisfied if $ 8 \theta_{1} mr \leq \eth $. This is indeed the case, since, from the choices of $\th_1$ and~$mr$, we have
\[
     8\th_1 mr \Le \eth\,\frac{\emin}{L_f\rmax} \Le \eth,
\]
because $L_f \rmax/\emin > 1$ if $\eth > 0$.

Then $ L_{q} r \leq mr $,
by inequality~(\ref{lem:ULB:ineq1}),  so inequality~(\ref{ineq:L1}) simplifies to give 
$ L_{f} \rmax \leq (\beta^{\prime} - \beta) \emin \theta_{1} $; and this is seen to hold, by inequality~(\ref{proof:ULB:eq1})
and the choice of~$\th_1$.
The choices of $ \theta_{1} $ and $ \theta_{2} $, together with inequality~(\ref{proof:ULB:eq2}), show further that 
inequality~(\ref{ineq:L2}) is implied by inequality~(\ref{lem:ULB:ineq3}), and that 
inequality~(\ref{ineq:theta2}) is implied by inequality~(\ref{lem:ULB:ineq2}). 
Finally,  the choice of~$\theta_{2} $ shows that inequality~(\ref{ineq:theta2c}) follows from 
inequality~(\ref{lem:ULB:ineq3}).
Thus, inequalities (\ref{ineq:L0})--(\ref{ineq:theta2c}) in Lemma~\ref{lem:LB} hold.

Take $ \Theta_{1},\ \Theta_{2} $ and $ \Theta_{3} $ as defined in the proof of Lemma \ref{lem:LB}. 
On $ \Theta_{1} \cup \Theta_{2} $, 
\[
    p^{\ast}_{\Theta,\beta}(z)\ \geq\ q_{\Theta,\beta^{\prime},m}(z)\ \geq\ \theta_{2} mr. 
\]
For $ z \in \Theta_{3} $, note that 
$ q_{\Theta,\beta^{\prime},m} \leq E_{\Theta,\beta}(q_{\Theta,\beta^{\prime},m}) \leq p^{\ast}_{\Theta,\beta} $ and that
\[
    E_{\Theta,\beta}(q_{\Theta,\beta^{\prime},m})(z)
      \ \geq\ \frac{f\left(\int a(y) c(z,y;r) q_{\Theta,\beta^{\prime},m}(y) \mathbb{I}(y\in \Theta) \s(y)\,dy\right)}
                          {\beta e(z)}.
\]

Now inequalities (\ref{lem:L3:eq7}) and~(\ref{lem:ULB:ineq3}) imply that
$$
     f\left(\int a(y) c(z,y;r) q_{\Theta,\beta^{\prime},m}(y) \mathbb{I}(y\in\Theta) \s(y)\,dy\right)
        \ \ge\ f\left(mr\,  \frac{\amin \smin \cth}{2^{(d+3)/2} \cmax } \right) 
          \ \ge\  f\left(\th_2 mr\,  \frac{\amin \smin}{2^{(d-1)/2} } \right).
$$ 
Then, from Assumption~E, we have $f(ab) \ge bf(a)$ if $0 \le b \le 1$.
Now $\th_2 < 1$, $2^{(d-1)/2} \ge 1$ and $mr \le 1/32$, because $0 < \eth \le 1$ and $L_f \rmax/\emin > 1$, 
so we conclude that
\[
    p^{\ast}_{\Theta,\beta}(z)\ \geq\  \frac{f\left(\amin \smin\right)}{2^{(d-1)/2}}\, \theta_{2} mr\,\frac1{\beta\emax}
      \ \geq\  \frac{f\left(\amin \smin\right)}{2^{(d+1)/2}\emax}\, \theta_{2} mr
\]
for all $ z \in \Theta_{3} $. 

Combining this with the lower bound on $ q_{\Theta,\beta^{\prime},m} $ for 
$ z \in \Theta_{1} \cup \Theta_{2} $ gives the uniform lower bound.
\end{proof}

\begin{lemma} \label{lem:localLB}
Suppose that Assumptions B--E hold, that $ \min_{z\in\Theta} q_{1}(z) =: \eta_{\Theta} > 0$ and that 
$1 < \beta < \beta^{\prime} < 1+ \eth/2 $. 
Assume that, in addition to inequalities (\ref{lem:ULB:ineq2})--(\ref{lem:ULB:ineq3}),
\begin{align}
   L_{q} r &\Le \frac{\emin^{2} \eth (\beta' - \beta)}{4 \rmax^{2} L_{f} (C_1\rmax + L_f)}; \label{lem:localLB:ineq1} \\
   \frac{r}{t} &\Le \frac{\emin(\beta' - \beta)}{6L_{f} \rmax}.  \label{lem:localLB:ineq2}
\end{align}
Then, choosing~$m$ as in Theorem~\ref{thm:LB} so that
\[
   mr \Eq \frac{\emin^{2} \eth (\beta' - \beta)}{4 \rmax^{2} L_{f} (C_1 \rmax + L_f)},
\]
it follows that $  p_{\Theta,\beta}^{\ast}(z) \geq q_{\Theta,\beta^{\prime},m}(z) $ for all $ z \in \Theta$.
\end{lemma}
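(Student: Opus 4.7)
The plan is to reduce Lemma~\ref{lem:localLB} directly to Lemma~\ref{lem:LB} by exhibiting suitable auxiliary parameters $\theta_1$ and $\theta_2$ for which all five hypotheses (\ref{ineq:L0})--(\ref{ineq:theta2c}) of that lemma are implied by the assumptions on the table. The argument parallels that of Lemma~\ref{lem:ULB}, but since here $\b'-\b$ is a free parameter (rather than the specific value $\eth/4$ obtained from the Lemma~\ref{lem:ULB} choice of $\b'$), the auxiliary $\theta_1$ must be scaled against $\b'-\b$.

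Concretely, I would take
\[
   \theta_2 \Def  \Bigl(\frac{\cth}{4\cmax}\Bigr) \wedge \frac{1}{\sqrt 2}, \qquad
   \theta_1 \Def \frac{L_f\rmax}{\emin(\b'-\b)},
\]
exactly so that, after using $L_q r \le mr$ (which is \Ref{lem:localLB:ineq1}), inequality~(\ref{ineq:L1}) becomes the equality $L_f\rmax = (\b'-\b)\emin\theta_1$. With the prescribed $mr = \emin^2\eth(\b'-\b)/(4\rmax^2 L_f(C_1\rmax+L_f))$, the product collapses cleanly to $\theta_1 mr = \emin\eth/(4\rmax(C_1\rmax+L_f))$, and the second term of~(\ref{ineq:L2}) becomes $\rmax(C_1\rmax+L_f)\theta_1 mr = \emin\eth/4$. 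The remaining piece of~(\ref{ineq:L2}) then reduces to $L_f(C_3+\rmax/t)r/\theta_2 \le \emin\eth/4$, which is exactly the second alternative in~\Ref{lem:ULB:ineq3} given the choice of $\theta_2$ (using that $\b\eth+1-\b\ge \eth/2$).

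For~(\ref{ineq:L0}) I would use that $\b'\eth+1-\b' \ge \eth/2$ (from $\b' < 1+\eth/2$), while the bound $\theta_1 mr \le \eth/4$ is immediate from $\emin \le L_f\rmax \le \rmax(C_1\rmax+L_f)$ (which holds because $L_f\rmax/\emin > 1$ whenever $\eth>0$), and $mr \le \theta_1 mr/\theta_1 \le \eth/8$ since $\theta_1 \ge 2$ (note $\b'-\b < \eth/2 \le 1/2$ and $L_f\rmax>\emin$ force $\theta_1 > 2$). Thus $(1+\theta_1)mr \le 3\eth/8 < \eth/2$. Inequality~(\ref{ineq:theta2}) has two pieces: $r/t\le\theta_2$ comes from~\Ref{lem:ULB:ineq2}; and $r/t\le 1/(2(2+\theta_1))$ follows from~\Ref{lem:localLB:ineq2}, since $\theta_1\ge 2$ gives $1/(2(2+\theta_1))\ge 1/(6\theta_1) = (\b'-\b)\emin/(6L_f\rmax)$. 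Inequality~(\ref{ineq:theta2c}) uses the first alternative in~\Ref{lem:ULB:ineq3}: with $\theta_2 \le \cth/(4\cmax)$ we have $\cth-2\cmax\theta_2 \ge \cth/2$, and with $\theta_2\le 1/\sqrt2$ we get $(1-\theta_2^2)^{(d-1)/2} \ge 2^{-(d-1)/2}$, so the right-hand side of~(\ref{ineq:theta2c}) is at least $\amin\smin\cth v_{d-1} 2^{-(d+1)/2}$, leaving a factor-of-two slack over~\Ref{lem:ULB:ineq3}.

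Once all five inequalities of Lemma~\ref{lem:LB} are verified, its conclusion $q_{\Theta,\b',m}(z)\le p^{\ast}_{\Theta,\b}(z)$ for all $z\in\Theta$ is exactly the desired statement. The only real obstacle is the bookkeeping for~(\ref{ineq:L2}) and checking that the choice of $\theta_1$ does not conflict with~(\ref{ineq:L0}); both hinge on the observation that $L_f\rmax/\emin>1$ whenever $\eth>0$, together with $\b'-\b<\eth/2$, and everything else is routine substitution.
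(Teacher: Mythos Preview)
Your proposal is correct and follows essentially the same approach as the paper: you make the identical choices $\theta_1 = L_f\rmax/(\emin(\b'-\b))$ and $\theta_2 = (\cth/4\cmax)\wedge 1/\sqrt2$, and verify inequalities (\ref{ineq:L0})--(\ref{ineq:theta2c}) of Lemma~\ref{lem:LB} by the same reductions. Your treatment of~(\ref{ineq:L0}) is in fact slightly more explicit than the paper's (using $\theta_1>2$ rather than $\theta_1>1$), but the argument is otherwise the same.
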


\begin{proof}
We show that the above inequalities are sufficient for the inequalities of Lemma~\ref{lem:LB} to hold,
with suitable choices of $\th_1$ and~$\th_2$.  Set 
\begin{align*}
    \theta_{1} &\Def \frac{L_{f} \rmax}{\emin(\beta' - \beta)},
%  mr & := \frac{e^{2}_{min} \eth (\beta' - \beta)}{4 \rmax^{2} L_{f} (C_1 \rmax + L_f)}.
\end{align*}
and choose~$ \theta_{2} $ as in Lemma~\ref{lem:ULB}. Note that $ \theta_{1} > 1 $, since 
$ \beta'-\beta \leq 1\leq L_{f} \rmax/\emin $.
Since $ \beta' \eth +1 - \beta' \geq \eth/2 $, inequality~(\ref{ineq:L0}) holds if $ 4mr \leq \eth $;
but this is true with the above choice of $ mr $, because $ (\beta' - \beta) < 1$ and $L_f \rmax/\emin > 1$. 

From inequality (\ref{lem:localLB:ineq1}), $ L_{q} r \leq mr $, and so inequality~(\ref{ineq:L1}) simplifies to give 
$ L_{f} \rmax \leq (\beta^{\prime} - \beta) e_{\min} \theta_{1} $, which holds with equality for~$\theta_{1} $
as chosen. To show that inequality~(\ref{ineq:L2}) holds, we first note that 
$ 4 \rmax (C_1 \rmax + L_f)  \theta_{1} mr \leq \emin \eth $. Therefore, 
inequality (\ref{ineq:L2}) holds if $ 4 L_{f} (C_3 + \rmax/t) r \leq \theta_{2} \emin \eth $, which follows 
from inequality (\ref{lem:ULB:ineq3}). 

The second part of inequality~(\ref{ineq:theta2}) holds by (\ref{lem:localLB:ineq2}) and because
\[
    2(2+\theta_{1}) \Eq \frac{4 \emin (\beta' - \beta) + 2 L_{f} \rmax}{\emin(\beta'-\beta)} 
      \Le \frac{6L_{f}\rmax}{\emin(\beta'-\beta)},
\]
again since $ \beta'-\beta \leq 1\leq L_{f} \rmax/\emin $. 

Finally, with~$\theta_{2} $ chosen as in Lemma~\ref{lem:ULB}, 
inequality~(\ref{ineq:theta2c}) follows from~\Ref{lem:ULB:ineq3} as in Lemma~\ref{lem:ULB},
and the first part of inequality~(\ref{ineq:theta2}) follows from~\Ref{lem:ULB:ineq2}.

\end{proof}

\begin{proof}[Proof of Theorem \ref{thm:LB}]
First note that inequality (\ref{LB:ineq3}) of Lemma \ref{lem:lb1} holds with 
$ \epsilon_{\Theta,\beta} = C_4 \eth^{2} $, since 
\[
 C_{4} \Le \frac{\emin^{2}}{32\sqrt{2} L_{f}^{2} \rmax^{2}} \Le \frac1{8\sqrt2}
\] 
when $ L_{f} \rmax / \emin > 1/2 $, and hence
\[
     C_4 \eth^2 (\beta-1) \Le C_4 \frac{\eth^3}2 \Le \frac1{16\sqrt2} \ <\ \frac12 \Le \frac{L_f\rmax}{\emin}.
\]

Now combine Lemmas \ref{lem:lb1}, \ref{lem:ULB} and \ref{lem:localLB}.
\end{proof}

\begin{proof}[Proof of Corollary \ref{cor1}]
By Theorem \ref{thm:UB}, $ p_{i,n}^{\ast} \leq p^{+}_{\alpha_{1},\alpha_{2}}(z_{i}) $ for all $ i=1,\ldots,n $ 
with high probability. Taking $ \alpha_{2} = 1- \eth $, we note that $q_{\alpha_{1}}(z) \geq q_{1}(z) \geq \eth $,
and so $ p_{\alpha_{1},\alpha_{2}}^{+} (z) = q_{\alpha_{1}}(z) $ for all $ z \in \Theta $. Therefore,
\begin{equation}
    p_{n,i}^{\ast} - q_{1}(z_{i}) \Le q_{\alpha_{1}}(z_{i}) -q_{1}(z_{i}), \label{cor:eq1}
\end{equation}
for all $ z_{i} \in \Theta $, with high probability. 

Note that $ q_{\Theta,\beta',m}(z) = q_{\beta'}(z) $ for all $ z \in \Theta_{m} $. By Theorem \ref{thm:LB} for all $ z_{i} \in \Theta_{m} $ 
\begin{equation}
    p_{n,i}^{\ast} - q_{1}(z_{i})\ \geq\ q_{\beta'}(z_{i}) - q_{1}(z_{i}) \label{cor:eq2}
\end{equation}
with high probability. Inequalities (\ref{cor:eq1}) and (\ref{cor:eq2}) imply that, for all $ z_{i} \in \Theta_{m} $,
\[
    \left| p_{n,i}^{\ast} - q_{1}(z_{i}) \right| \Le q_{\alpha}(z_{i}) - q_{\beta'}(z_{i}) 
\]
with high probability. As in the proof of Lemma \ref{lem:Lips} 
\[
    \frac{\partial q_{\alpha}(z)}{\partial \alpha} \Le \frac{-e(z)}{\alpha e(z) + f(\rho(z) q_{\alpha}(z)}.
\]
Hence,
\[
   q_{\alpha_1}(z) - q_{\beta'}(z) \Le \int_{\alpha_1}^{\beta'} \frac{e(z)\, du}{ue(z) + f(\rho(z)q_{u}(z))} 
          \Le \a_1^{-1}(\beta'-\alpha_1).
\]
\end{proof}

\begin{proof}[Proof of Corollary \ref{cor2}]
The corollary follows from Corollary~\ref{cor1}, with appropriate choices of $ \a_1, \a_2, \b $ and $ \b' $. First note that $r_n^{1-\g_1}\phi_n \le c_1r_n^{\g_1}/4$ for all~$n$ sufficiently large, if  $r_n^{1-2\g_1}\phi_n \to 0$.  Thus we can take $1-\a_2 = \eta_{\Omega_n}$, $ 1- \a_1 = r_n^{1-\g_1} \phi_n $ and $\b'-\b = \b-1 = r_n^{1-\g_1}\phi_n$, and satisfy $\a_1 \ge \a_2$ and $\b'-1 \le \eta_{\Omega_n}/2$ for all~$n$ sufficiently large.  With these choices of $\a_1$ and~$\a_2$, 
inequality \Ref{UB:ineq1} of Theorem~\ref{thm:UB} holds for all~$ n $  sufficiently large; the choices of $\b$ and~$\b'$ show that inequality \Ref{essential-conditions} holds,  fulfilling the conditions of Theorem~\ref{thm:LB}.  Then the probabilities in Corollary~\ref{cor1} converge to~$1$, as required, in view of~\Ref{cor2-1} and~\Ref{r-smooth}, and $\a_1^{-1}(\b'-\a_1) = O(r_n\phi_n)$  and $ m \asymp \phi_{n}$.
\end{proof}

\end{document}